\newcommand{\R}{\mathbb{R}}
\newcommand{\Rn}{\mathbb{R}^{n}}
\newcommand{\dom}{\mathop{\rm dom}\nolimits}
\journalname{Computational Optimization  and Applications}
\begin{document}

\title{Visualization of the $\epsilon$-Subdifferential of Piecewise Linear-Quadratic Functions
}


\author{Anuj Bajaj \and Warren Hare \and Yves Lucet}


\institute{A. Bajaj \at Mathematics, 1132 Faculty/Administration Building, Wayne State University, 42 W. Warren Ave., Detroit, MI 48202, USA\\
              \email{{\color{blue}anuj.bajaj@alumni.ubc.ca}}         
           \and
           W. Hare \at
           Mathematics, ASC 353, The University of British Columbia - Okanagan (UBCO), 3187 University Way, Kelowna, BC, V1V 1V7, Canada \\
           \email{{\color{blue}warren.hare@ubc.ca}}
           \and
           Y. Lucet \at
           Computer Science, ASC 350, UBCO \\
           \email{{\color{blue}yves.lucet@ubc.ca}}
           }

\date{Received: date / Accepted: date}

\maketitle

\begin{abstract}
The final publication is available at Springer via \url{http://dx.doi.org/10.1007/s10589-017-9892-y}

Computing explicitly the $\epsilon$-subdifferential of a proper function amounts to computing the level set of a convex function namely the conjugate minus a linear function. The resulting theoretical algorithm is applied to the the class of (convex univariate) piecewise linear-quadratic functions for which existing numerical libraries allow practical computations. We visualize the results in a primal, dual, and subdifferential views through several numerical examples. We also provide a visualization of the Br\o{}ndsted-Rockafellar Theorem.

\keywords{Subdifferentials \and $\epsilon$-Subdifferentials \and Computational convex analysis (CCA) \and Piecewise linear-quadratic functions \and convex function \and visualization}
\end{abstract}

\section{Introduction}
\label{intro}
Subdifferentials generalize the derivatives to nonsmooth functions, which makes them one of the most useful instruments in nonsmooth optimization. The $\epsilon$-subdifferentials, which are a certain relaxation of true subdifferentials, arise naturally in cutting-plane and bundle algorithms and help overcome some limitations of subdifferential calculus of convex functions.  As such, they are a useful tool in convex analysis.

We begin by defining the central concept of this work, the $\epsilon$-subdifferential of a convex function.
\begin{definition}
	\label{Def:2}
	Let $f:\Rn \rightarrow \R \cup \{-\infty,+\infty\}$ be convex, $x \in \text{dom}(f)$ and $\epsilon \geq 0$. The \emph{$\epsilon$-subdifferential} of $f$ at $x$ is the set
	\begin{equation*}
	\partial_{\epsilon}{f(x)} = \{s \in \Rn:f(y) \geq f(x) + \langle s,y - x\rangle - \epsilon \quad \text{for all} \quad  y \in \Rn\}.
	\end{equation*}
The elements of $\partial_{\epsilon}{f(x)}$ are known as the \emph{$\epsilon$-subgradients} of $f$ at $x$.  We define  $\partial_{\epsilon}{f(x)} = \emptyset$ when $ x\notin \text{dom}(f)$. (Historically, \cite{BRONDSTED-65} used the term ``approximate subgradients'', but we adopt the more common terminology of $\epsilon$-subgradient to make the distinction with the approximate subdifferential introduced in~\cite{IOFFE-84}.)
\end{definition}

In Figure \ref{subdifferentials}, we visualize the $\epsilon$-subdifferential of a convex function for $\epsilon = 2$ and $\epsilon  = 0$.  The $\epsilon$-subdifferential is the set of all vectors that create linearizations passing through $f(x) - \epsilon$ that remain under $f$.

   \begin{figure}[H]
   \centering
	\subfloat{\includegraphics[width=0.5\textwidth]{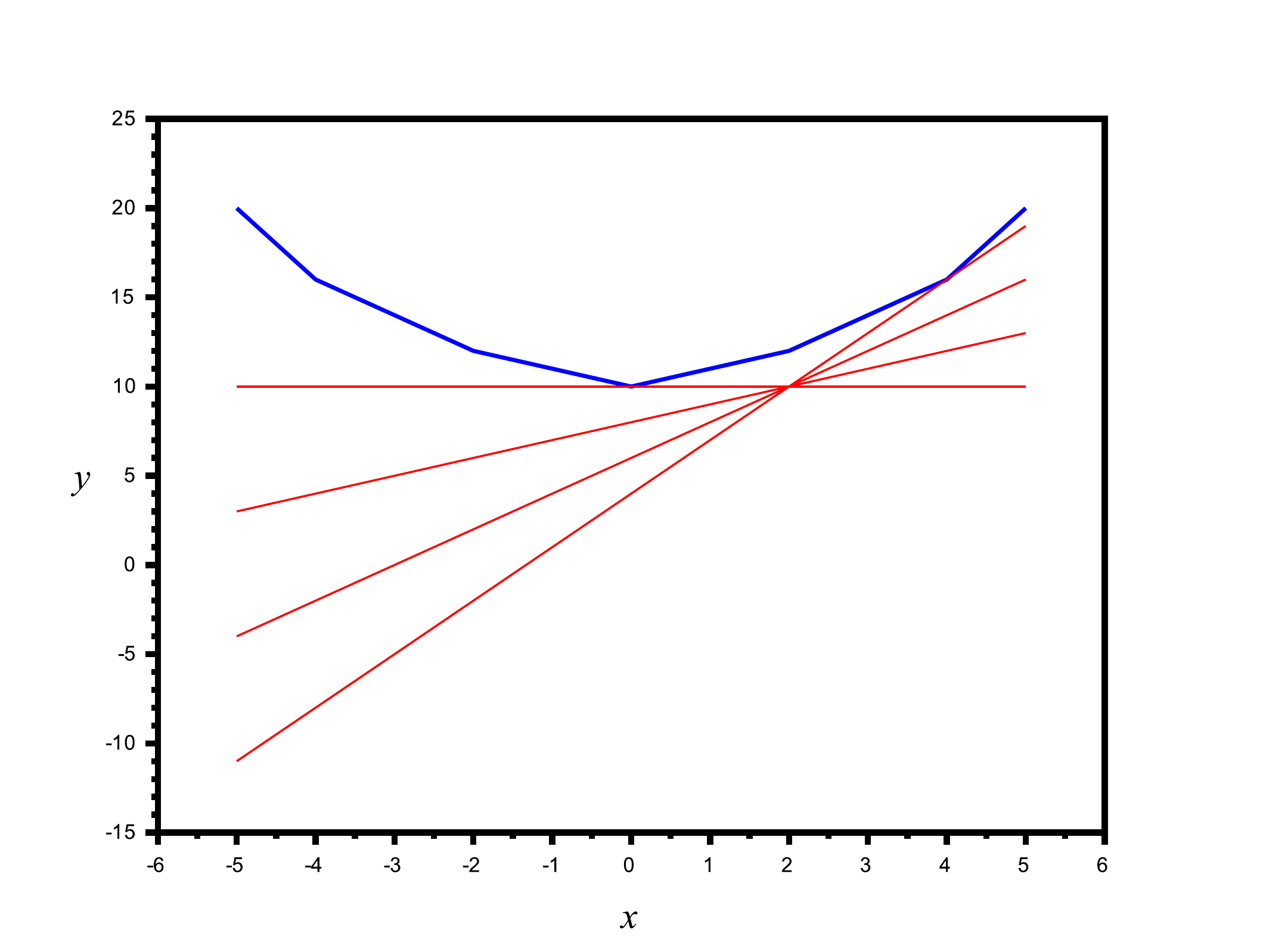}}
	\subfloat{\includegraphics[width=0.47\textwidth]{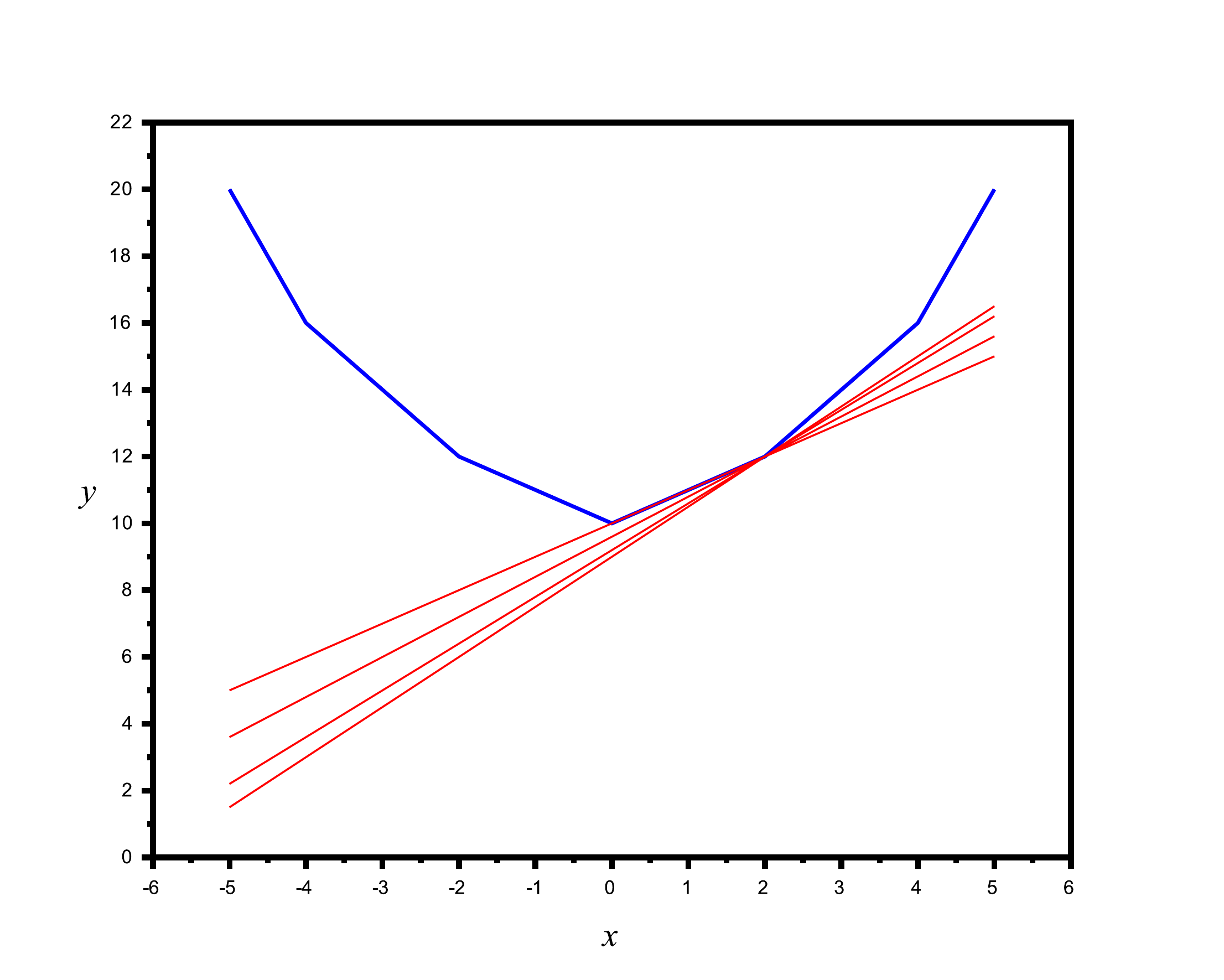}}
	\caption{An illustration of the construction of the $\epsilon$-subdifferential of an example convex function at $x=2$ for $\epsilon = 2$ (left) and $\epsilon = 0$ (right).}
	\label{subdifferentials}
   \end{figure}

Notice that, for $\epsilon = 0$ we obtain the classical subdifferential of a convex function
    $$\partial {f(x)} = \{s \in \Rn:f(y) \geq f(x) + \langle s,y - x\rangle \quad \text{for all} \quad  y \in \Rn\}.$$
It immediately follows from the definition that $ \partial{f(x)} \subseteq \partial_{\epsilon}{f(x)}$ for any $\epsilon \geq 0$.  Thus, the $\epsilon$-subdifferential can be regarded as an enlargement of the true subdifferential.

In the context of nonsmooth optimization, various numerical methods have been developed based on the notion of subdifferentials. One group of foundational methods of particular interest to this work are {\em Cutting Planes} methods.  Cutting Planes methods work by approximating the objective function by a piecewise linear model based on function values and subgradient vectors:
   \begin{equation}
   \label{cpmodel}
   \check{f}_{m}(x) = \max_{i=1,2,\ldots,m}\{f(x^{i}) + \langle s^{i}, x-x^{i}\rangle\}
   \end{equation}
where $s^{i} \in \partial f(x^{i}) = \partial_0 f(x^i)$.  This model is then used to guide the selection of the next iterate.  Various methods based on cutting planes models exist.  For example, proximal bundle methods \cite{Frangioni2002,HareSagastizabal2010,Kiwiel1990,LemarechalSagastizabal1997}, level bundle methods \cite{CorreaLemarechal1993,OliveiraSagastizabal2014,Kiwiel1995}, and hybrid approaches \cite{OliveiraSolodov2016} (among many more).

In Figure \ref{cutting plane} we illustrate 2 iterations of a very basic cutting planes method.

   \begin{figure}[H]
   \centering
   	\centering
   	\subfloat{\includegraphics[width=0.5\textwidth]{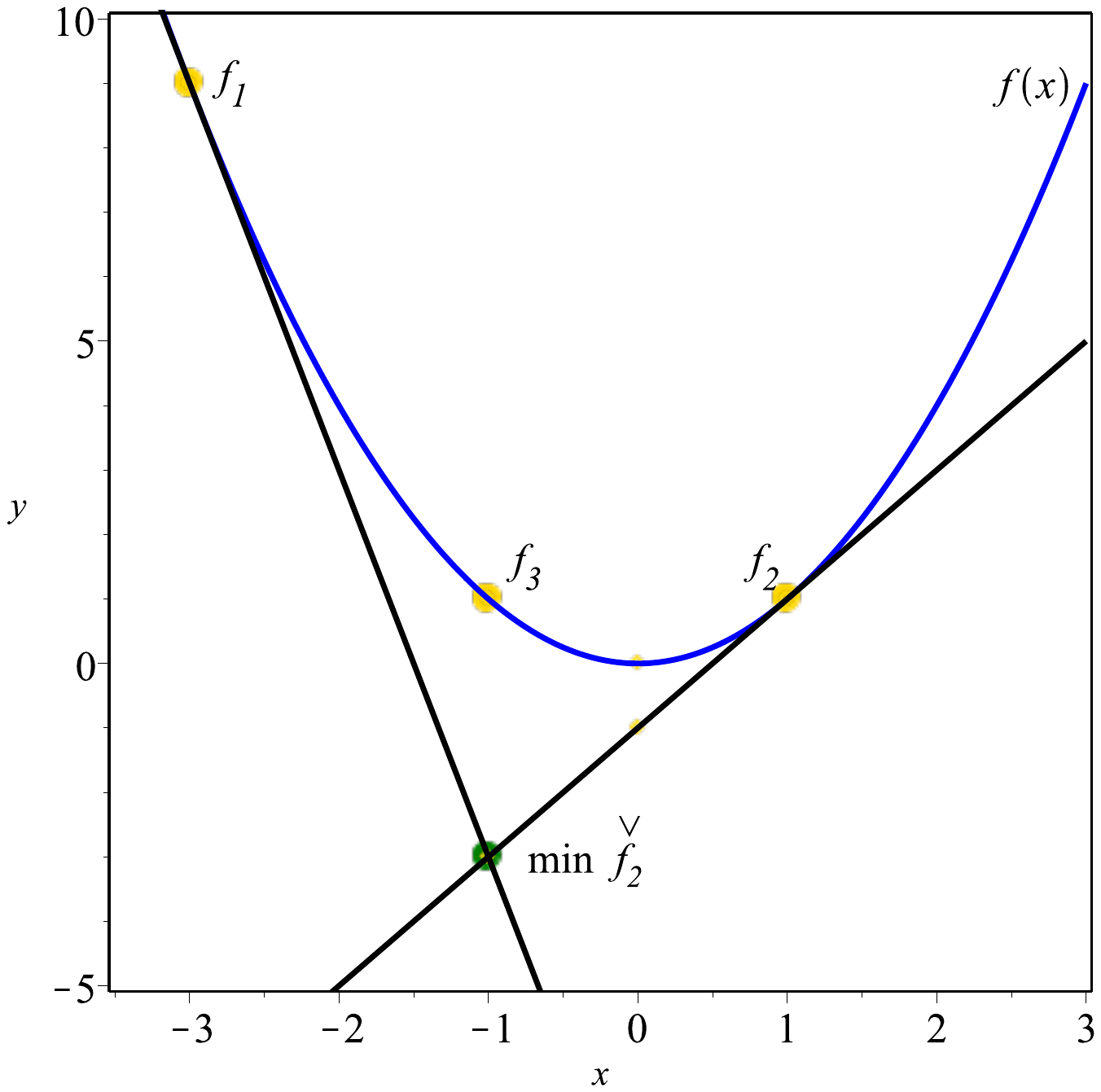}\label{fig:f3}}
   	\subfloat{\includegraphics[width=0.5\textwidth]{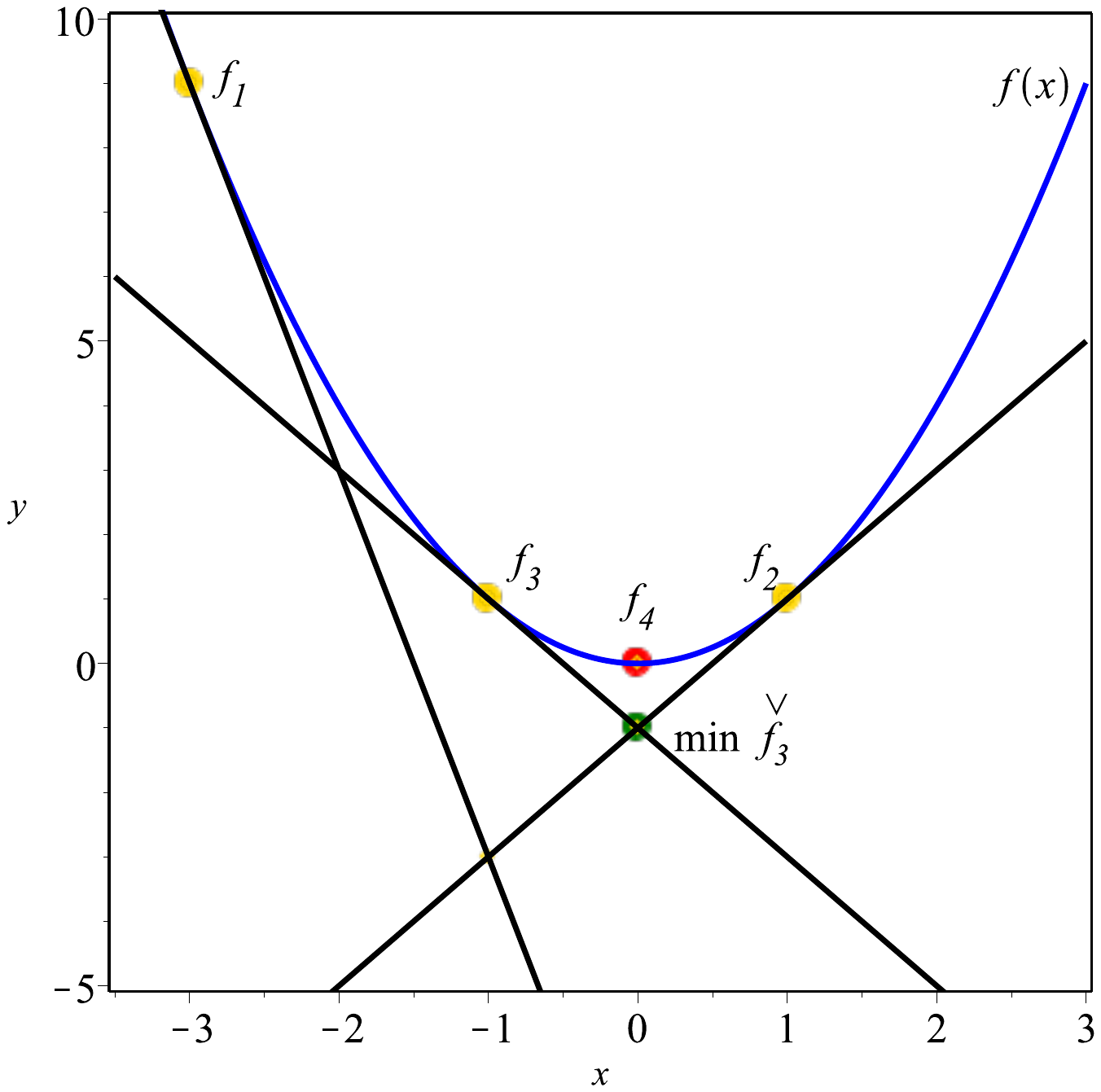}\label{fig:f4}}
   	\vspace{-120pt}
   	\caption{An illustration of a simple Cutting Planes method}
   	\label{cutting plane}
   \end{figure}

In Figure \ref{cutting plane}, we begin with points $x^{1}$ and $x^{2}$ whose function values and (sub)gradients are used to build the model $\check{f}_{2}$. The next iterate, $x^{3}$, is the minimizer of the model  $\check{f}_{2}$, and the function value and a (sub)gradient at $x^3$ is used to refine the model and create $\check{f}_3$.

While this very basic method is generally considered ineffective \cite[Example 8.1]{Bonnans2006}, it has lead to the plethora of methods mentioned above, and helps provide insight on how the $\epsilon$-subdifferential arises naturally in nonsmooth optimization. Specifically, suppose model $\check{f}_k$ is constructed via equation \eqref{cpmodel} and used to select a new iterate $x^{k+1}$ via the simple rule $x^{k+1} \in \arg\min \check{f}_k$.  By equation \eqref{cpmodel} and the definition of the subdifferential, we have $\check{f}_k(x) \leq f(x)$ for all $x$.  Thus,
   \begin{equation*}
   \check{f}_{k}(x^{k+1}) \leq \check{f}_{k}(x) \leq f(x) \quad \text{for all} \quad x,
   \end{equation*}
which yields
\begin{equation}
   \label{epsequation}
   f(x^{k+1}) + \langle 0,x-x^{k+1}\rangle - (f(x^{k+1}) - \check{f}_{k}(x^{k+1})) \leq f(x) \quad \text{for all} \quad x \in \R.
\end{equation}
That is,
\begin{equation*}
   0 \in \partial_{\epsilon_k}f(x^{k+1}) \quad \text{for} \quad \epsilon_k = f(x^{k+1}) - \check{f}_{k}(x^{k+1}) \geq 0.
\end{equation*}
This insight can lead to a proof of convergence (by proving $\epsilon_k \rightarrow 0$) and provides stopping criterion for the algorithm.  When the simple rule $x^{k+1} \in \arg\min \check{f}_k$ is replaced by more advanced methods, convergence analysis often follows a similar path, first showing $0 \in \partial_{\epsilon_k}f(x^{k+1})$ for some appropriate choice of $\epsilon_k$ and then showing $\epsilon_k \rightarrow 0$.  Thus we see one example of the $\epsilon$-subdifferentials role in nonsmooth optimization.

The $\epsilon$-subdifferential has also been studied directly, and a number of calculus rules have been developed to help understand its behaviour \cite{hiriart1995subdifferential,CorreaHantouteJourani2016}.  In this work, we are interested in the development of tools to help compute and visualize the $\epsilon$-subdifferential, at least in some situations.  We feel that such tools will be of great value to build intuition and broader understanding of this important object in nonsmooth optimization.

In this paper, we focus on finding the $\epsilon$-subdifferential of univariate convex piecewise-linear quadratic (plq) function.  Such functions are of interest since they are computationally tractable \cite{GARDINER-13,gardiner2010convex,GARDINER-11,lucet2009piecewise,trienis2007computational} (see also Section \ref{Sec:3}), arise naturally as penalty functions in regularization problems \cite{JMLR:v14:aravkin13a}, and arise in variety of other situations  \cite{JMLR:v14:aravkin13a,dembo1990efficient,hare2014thresholds,rantzer2000piecewise,rockafellar1988essential,rockafellar1986lagrangian}. Moreover, any convex function can be approximated by such a convex plq function.

The present work is organized as follows. Section \ref{Sec:2} provides some key definitions relevant to this work. Subsection \ref{Subsec: 3.1} presents a general algorithm for computing the $\epsilon$-subdifferential of any proper convex function along with a few numerical examples. Subsection \ref{Sec: 3.2} presents an implementation of the general algorithm for the class of univariate convex plq functions.  It also discusses the data structure and the complexity of the algorithm. Section \ref{Sec: 4} illustrates the implementation with some numerical examples, including a visualization of the classic  Br\o{}ndsted-Rockafellar Theorem. Section \ref{Sec: 5} summarizes the work we have done and contains a discussion on the limitations of extending the required implementation. It also provides some directions for future work.

   \section{Key Definitions}
   \label{Sec:2}
   In this section, we provide few key definitions required to understand this work. We assume the reader is familiar with basic definitions and results in convex analysis.

    \begin{definition}
	Given a function $f: \R^{n} \rightarrow \R\cup\{+\infty\}$ (not necessarily convex), the \emph{convex conjugate} (commonly known as the Fenchel Conjugate) of $f$ denoted by $f^{\ast}$ is defined as
	\begin{equation*}
	f^{\ast}(s) = \sup_{x \in \Rn}\{\langle s,x \rangle - f(x)\}.
	\end{equation*}
	We denote $\dom f=\{x\in \Rn : f(x) \in \R \}$.
    \end{definition}

    \begin{definition}
	A set $S \subseteq \Rn$ is called \emph{polyhedral} if it can be specified as finitely many linear constraints. 
	\begin{equation*}
	S = \{x : \langle a_{i},x \rangle \leq b_{i} \hspace{4pt}, \hspace{4pt} i = 1,2,\hdots,p \}
	\end{equation*}
	where for $i = 1,2,\hdots,p$, $a_{i} \in \Rn$ and $b_{i} \in \R$.
    \end{definition}

     \begin{definition}
     A function $f:\Rn \rightarrow \R \cup \{-\infty,+\infty\}$ is \emph{piecewise linear-quadratic} (plq) if $\text{dom}(f)$ can be represented as the union of finitely many polyhedral sets, relative to each of which $f(x) = \dfrac{1}{2}\langle Ax,x \rangle + \langle b,x \rangle + c$ where $A \in \R^{n \times n}$ is a symmetric matrix, $b \in \R^{n}$ and $c \in \R$.\\
     Note that a plq function is continuous on its domain.
    \end{definition}

    \section{Algorithmic Computation of the \texorpdfstring{$\epsilon$}{eps}-subdifferential}
    \label{Sec:3}
    In this section, we propose a general algorithm that enables us to compute the $\epsilon$-subdifferential for any proper function. While the algorithm would be difficult (or impossible) to implement in a general setting, we shall present an implementation specifically for univariate convex plq functions (Section \ref{Sec: 3.2}). We then illustrate the implementation with some numerical examples (Section \ref{Sec: 4}).

    \subsection{The Appx\_Subdiff Algorithm}
    \label{Subsec: 3.1}
We now prove elementary results that will justify the algorithm. Note that the function $m$ defined next is only introduced because it is already available in the CCA numerical library; it is not necessary from a theoretical viewpoint.

      \begin{proposition}
      	\label{thm: 3.1}
      	Let $f : \Rn \rightarrow \R \cup \{+\infty\}$ be a proper function, $\bar{x} \in \mathrm{dom}(f)$ and $\epsilon > 0$.  Note $l_{\bar{x}}: s \mapsto \epsilon-f(\bar{x}) + \langle s,\bar{x} \rangle$ and $m(s) = \min\{f^{\ast}(s),l_{\bar{x}}(s)\}$.  Then
      	\begin{equation}
      	\partial_{\epsilon}f(\bar{x}) = \{s \in \mathbb{R}^{n}: m(s) = f^{\ast}(s)\}.
      	\end{equation}
       \end{proposition}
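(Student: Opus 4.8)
The plan is to collapse both sides of the claimed identity to a single Fenchel--Young-type inequality, namely $f^{\ast}(s) \le l_{\bar{x}}(s)$, and then read off the equivalence. This is the $\epsilon$-analogue of the classical fact that $s \in \partial f(\bar{x})$ iff $f(\bar{x}) + f^{\ast}(s) = \langle s,\bar{x}\rangle$, so I expect the argument to be short.

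First I would rewrite the right-hand set. For extended real numbers $a,b$ one has $\min\{a,b\} = a$ exactly when $a \le b$; applying this with $a = f^{\ast}(s)$ and $b = l_{\bar{x}}(s)$, the condition $m(s) = f^{\ast}(s)$ is equivalent to $f^{\ast}(s) \le l_{\bar{x}}(s) = \epsilon - f(\bar{x}) + \langle s,\bar{x}\rangle$. Next I would unpack the definition of $\partial_{\epsilon}f(\bar{x})$: by Definition~\ref{Def:2}, $s \in \partial_{\epsilon}f(\bar{x})$ means $f(y) \ge f(\bar{x}) + \langle s, y - \bar{x}\rangle - \epsilon$ for all $y \in \Rn$. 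Since $\bar{x} \in \dom(f)$, $f(\bar{x})$ is finite, so for each fixed $y$ this rearranges equivalently to $\langle s, y\rangle - f(y) \le \langle s,\bar{x}\rangle - f(\bar{x}) + \epsilon$. The right-hand side is independent of $y$, so the whole family of inequalities holds iff the supremum of the left-hand side over $y$ is dominated by it, i.e. iff $f^{\ast}(s) \le \langle s,\bar{x}\rangle - f(\bar{x}) + \epsilon = l_{\bar{x}}(s)$. Chaining the two reductions proves the proposition.

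The only places needing a little care, rather than a genuine obstacle, are: that $f(\bar{x})$ is finite so the rearrangement is legitimate (this is exactly the hypothesis $\bar{x}\in\dom(f)$); that the step ``inequality for all $y$'' $\Longleftrightarrow$ ``$\sup_y \le$'' remains valid when $f^{\ast}(s) = +\infty$, in which case both the membership in $\partial_{\epsilon}f(\bar{x})$ and the equality $m(s)=f^{\ast}(s)$ fail simultaneously (note $l_{\bar{x}}$ is real-valued); and that properness of $f$ guarantees $f^{\ast}(s) > -\infty$, since $f^{\ast}(s) \ge \langle s,\bar{x}\rangle - f(\bar{x})$. None of these obstruct the argument, so I would present it essentially as the two rewrites above followed by the observation that they match.
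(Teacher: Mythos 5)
Your proposal is correct and follows essentially the same route as the paper's proof: rewrite $m(s)=f^{\ast}(s)$ as $f^{\ast}(s)\le l_{\bar{x}}(s)$, then identify this pointwise-in-$y$ inequality with the definition of $\partial_{\epsilon}f(\bar{x})$. The extra remarks about finiteness of $f(\bar{x})$ and the $f^{\ast}(s)=+\infty$ case are sound precautions that the paper leaves implicit.
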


       \begin{proof}
			Applying the definition of $m$, $l_{\bar{x}}$, and $f^*$ we obtain
			\begin{align*}
			\{s \in \mathbb{R}^{n}: m(s) = f^{\ast}(s)\}
					&= \{s \in \mathbb{R}^{n}:  f^{\ast}(s) \leq l_{\bar{x}}(s) \}, \\
					&= \{s \in \mathbb{R}^{n}:  \langle s, x \rangle -f(x) \leq \epsilon -f(\bar{x}) +\langle s, \bar{x} \rangle, \; \forall x \}, \\
					&= \partial_{\epsilon}f(\bar{x}).
						\end{align*}
      \qed \end{proof}
       Applying Proposition \ref{thm: 3.1} immediately produces the following algorithm for computing the $\epsilon$-subdifferential of a proper function.

   \begin{algorithm}[H]
   \normalsize
   \caption{Appx\_Subdiff Algorithm}
   \textbf{Input}: $f$ (proper function), $\bar{x} \in \text{dom}(f)$, $\epsilon > 0$\\
   \textbf{Output}: $X$
   \begin{algorithmic}[1]
    \STATE Compute $f^{\ast}(s)$ \\
   \STATE Define $l_{\bar{x}}(s) = \epsilon - f(\bar{x}) + \langle s,\bar{x} \rangle$ \\
    \STATE Define $m(s)$ = $\min\{f^{\ast}(s),l_{\bar{x}}(s)\}$ \\
    \STATE Output $X = \{s \in \text{dom}(f^{\ast}): m(s) = f^{\ast}(s)\}$ \\
   \end{algorithmic}
   \label{alg1}
   \end{algorithm}
   To shed some light upon the algorithm we consider the following example.
   \begin{example} Consider the function $f(x) = \dfrac{\| x\|^{p}}{p}$, $x \in \Rn$ where $1<p<\infty$.
    From \cite[Table 3.1]{borwein2010convex} we have $ f^{\ast}(s) = \dfrac{\| s\|^{q}}{q}$, $s \in \Rn$ where $ \dfrac{1}{p} + \dfrac{1}{q} = 1$.
    We also have $l_{\bar{x}}(s)= \epsilon -  \dfrac{\| \bar{x}\|^{p}}{p}  + \langle s,\bar{x} \rangle $.
   Thus, we have
    \begin{eqnarray}
    \label{equation 4.1}
    \partial_{\epsilon}f(\bar{x})
     &=&  \{s \in \Rn: f^{\ast}(s) \leq l_{\bar{x}}(s) \} \nonumber \\
    &=&  \bigg \{ s \in \Rn: \dfrac{\|s\|^{q}}{q} \leq \epsilon -  \dfrac{\| \bar{x}\|^{p}}{p}  + \langle s,\bar{x} \rangle \bigg\}.
    \end{eqnarray}
   In particular, for $\bar{x} =0$, Equation (\ref{equation 4.1}) becomes
    \begin{eqnarray*}
     \partial_{\epsilon}f(0)
      &=&  \bigg \{ s \in \Rn: \dfrac{\| s\|^{q}}{q} \leq \epsilon \bigg\}\\
      &=& \{ s \in \Rn: \| s\|^{q} \leq q\epsilon \}\\
     &=& \bigg\{ s \in \Rn: \| s\| \leq \bigg(\dfrac{p\epsilon}{p-1}\bigg)^{(p-1)/p} \bigg\} .
     \end{eqnarray*}
     \label{Example4.1}

   The particular case of $p = 5$ and $\epsilon =1$ is illustrated in Figure \ref{Fig1}.
   \begin{figure}[H]
   \begin{minipage}[b]{0.35\linewidth}
   	\includegraphics[scale=0.5]{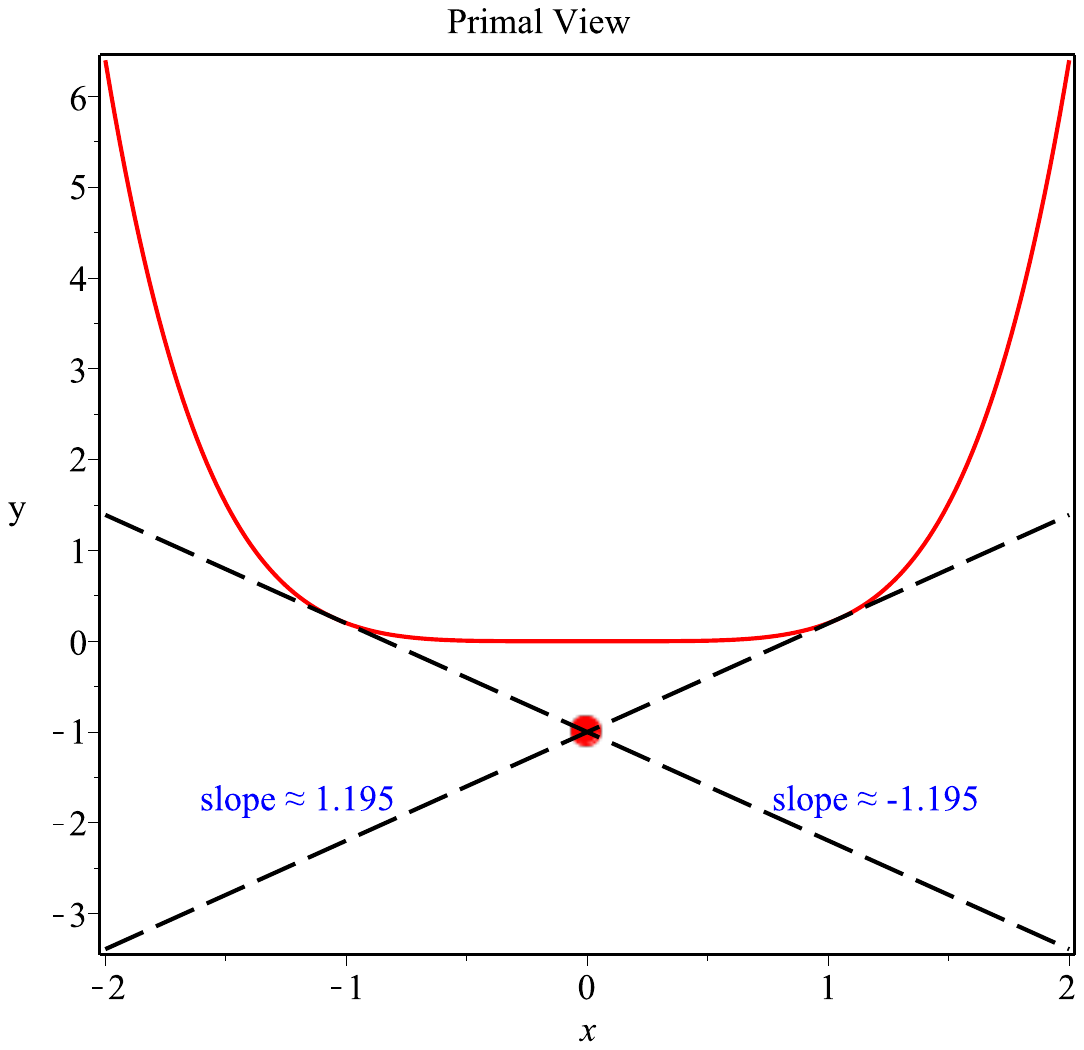}
   \end{minipage}
   \hspace{30pt}
   \begin{minipage}[b]{0.35\linewidth}
   	\includegraphics[scale=0.5]{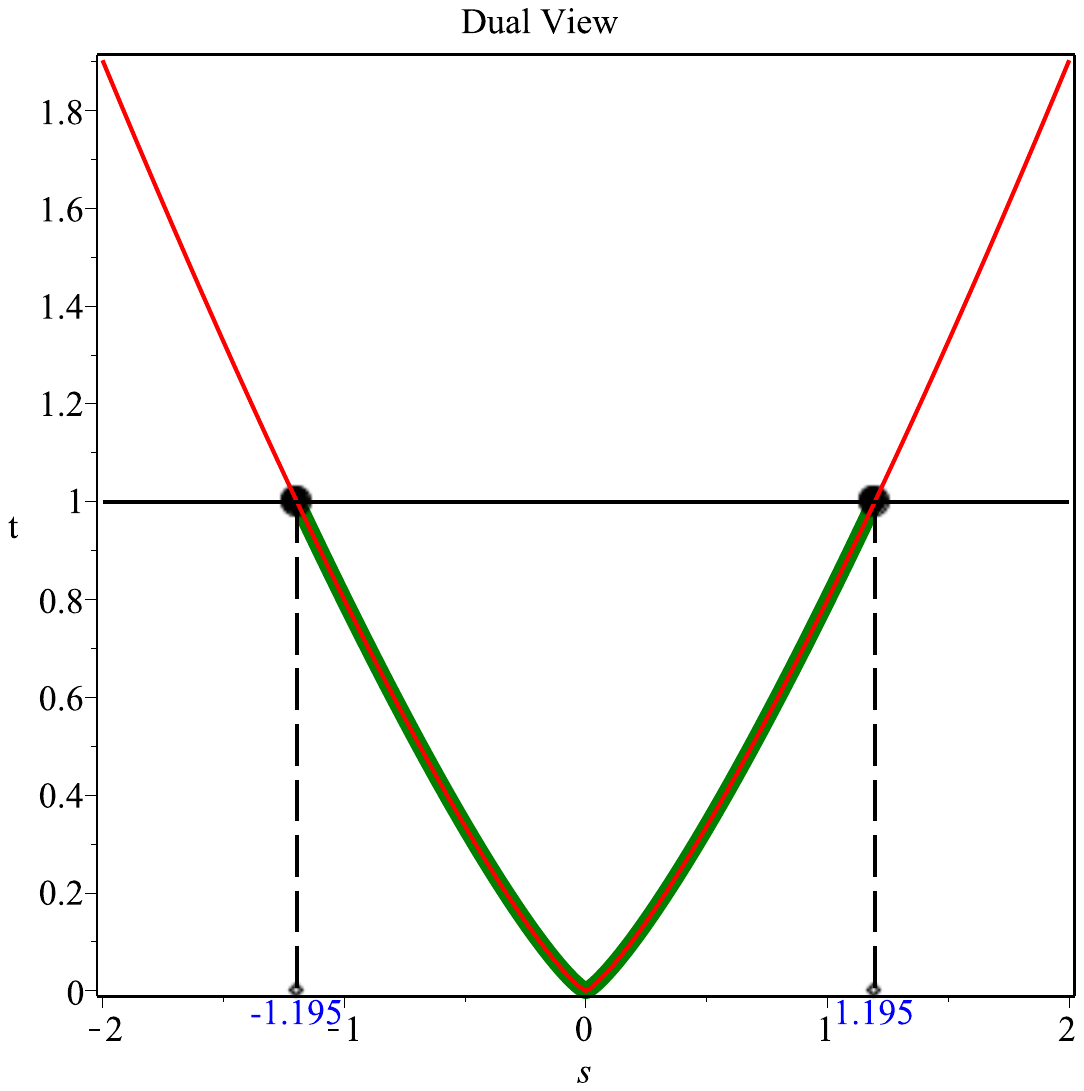}
   \end{minipage}
   \vspace{-210pt}
   \caption{The primal view (left) depicts the graph of $f(x)= \frac{\lvert x\rvert^{5}}{5}$ (red curve) along with the black dashed lines passing through the point $(\bar{x},f(\bar{x})-\epsilon) = (0,-1)$ (red dot) having slopes $-1.195$ and $1.195$ respectively (the lower and upper bounds of $\partial_{\epsilon} f(\bar{x})$). The dual view depicts the graphs of $f^{\ast}(s) =  \frac{4}{5}\lvert s\rvert^{5/4}$ (red curve) and $l_{\bar{x}}(s)$ (solid black line). The green curve shows when the graphs of  $m(s)$ and $ f^{\ast}(s)$ coincide.}
   	\label{Fig1}
   \end{figure}
   \end{example}



  Given the framework of Algorithm \ref{alg1}, a natural question to ask is whether there exists a collection of functions which allows for a general implementation. As mentioned, we consider the well-known class in Nonsmooth Analysis of plq functions.

   \subsection{Implementation: Convex univariate plq Functions}
   \label{Sec: 3.2}

   Our goal in this research is to develop a software that computes and visualizes $\partial_{\epsilon}f(\bar{x})$ at an arbitrary point $\bar{x}$ and $\epsilon > 0$ for a proper convex plq function. As visualization is a key goal, we shall focus on univariate functions.

   \begin{remark}
   \label{definition 4.7}
   	Suppose $f: \R \rightarrow \R \cup\{+\infty\} $ is a proper function. Then, $f$ is a plq function if and only if it can be represented in the form
   	\begin{equation}
   	f(x) =  \left\{
   	\begin{array}{ll}
   	q_{0}(x) = a_{0}x^{2} + b_{0}x + c_{0} , &\text{if} \hspace{5pt}  -\infty < x < x_{0} \\\\
   	q_{1}(x) = a_{1}x^{2} + b_{1}x + c_{1}, &\text{if} \hspace{5pt} x_{0} \leq x \leq x_{1}\\\\
   	q_{2}(x) = a_{2}x^{2} + b_{2}x + c_{2}, &\text{if} \hspace{5pt} x_{1} \leq x \leq x_{2}\\\\
   	\vdots & \vdots \\\\
   	q_{N-1}(x) = a_{N-1}x^{2} + b_{N-1}x + c_{N-1},  &\text{if} \hspace{5pt} x_{N-2} \leq x \leq x_{N-1}\\\\
   	q_{N}(x) = a_{N}x^{2} + b_{N}x + c_{N}, &\text{if} \hspace{5pt}  x_{N-1} < x < +\infty,
   	\end{array}\right.
   	\label{eqn4.3}
   	\end{equation}
   	where, $a_{i} \in \R$ for $i= \{0,1,\cdots,N\}$, $b_{i} \in \R$ for $i= \{0,1,\cdots,N\}$, $c_{i} \in \R$ for $i= \{1,\cdots,N-1\}$ and $c_{i} \in \R\cup \{+\infty\}$ for $i= \{0,N\}$.
   	\end{remark}
	

    An interesting property of plq functions is that they are closed under many basic operations in convex analysis: Fenchel conjugation, addition, scalar multiplication, and taking the Moreau envelope \cite[Proposition 5.1]{lucet2009piecewise}.
		

     \begin{remark}
    Even though the minimum of two plq functions is not necessarily a plq function (see Example~\ref{minfunction}), we can still compute the $\epsilon$-subdifferential from the plq data structure explained in Section \ref{Susubsec: 3.2.1}.
     \end{remark}

    \begin{example}    \label{minfunction}
    Consider $f_{1}(x) = 0$ if $x \in [-1,1]$ and $+\infty$ otherwise; and
    $f_{2}(x) =   x$. Clearly $f_{1}$ and $f_{2}$ are proper convex plq functions but $f(x) = \min\{f_{1}(x),f_{2}(x)\} =  x$ if $x < 0$, $0$ if $0 \leq x \leq 1$, and $x$ when $1 < x$. Notice $f$ is discontinuous at $x=1$ as shown by Figure \ref{minfunctionfigure}.
       	
      \begin{figure}[H]
      \centering \vspace{-1.5cm} \hspace{1cm}
      \includegraphics[scale=0.5]{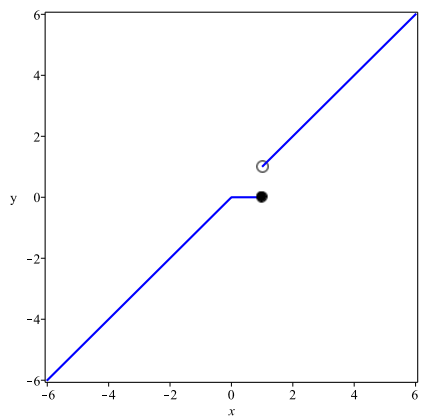}
			\vspace{-7.5cm}
      \caption{The function $f(x)=\min(\iota_{[-1,1]}(x),x)$ is discontinuous at $x=1$.}
      \label{minfunctionfigure}
      \end{figure}
  \end{example}

   	To implement Algorithm \ref{alg1}, for univariate convex plq functions, we shall use the Computational Convex Analysis (CCA) toolbox, which is openly available for download at \cite{atoms}. It is coded using Scilab, a numerical software freely available \cite{SCI}.
   	The toolbox encompasses many algorithms to compute fundamental convex transforms of  univariate plq functions, as introduced in \cite{lucet2009piecewise}. Table \ref{Table4.1} outlines the operations available in the CCA toolbox important to this work.
   	
   	\begin{table}[H]
   	\normalsize
   	\caption{Functions in the CCA Toolbox relevant to Algorithm \ref{alg1}}
   	\centering
    \begin{tabular}{l l}
   	\hline\hline
    \textbf{Function} & \hspace{50pt} \textbf{Description} \\
    \hline
    plq\_check(plq$f$) & \hspace{50pt} Checks integrity of a plq function\\
    plq\_isConvex(plq$f$) & \hspace{50pt} Checks convexity of a plq function \\
    plq\_lft(plq$f$) & \hspace{50pt} Fenchel conjugate of a plq function \\
    plq\_min(plq$f_{1}$,plq$f_{2}$) & \hspace{50pt} Minimum of two plq functions \\
    plq\_isEqual(plq$f_{1}$,plq$f_{2}$) & \hspace{50pt} Checks equality of two plq functions \\
    plq\_eval(plq$f$,$X$) & \hspace{50pt} Evaluates a plq function on the grid $X$\\
   	\hline
    \end{tabular}
   	\label{Table4.1}
   	\end{table}
   	
   	\subsubsection{Data Structure}\label{Susubsec: 3.2.1}
   	We next shed some light on the data structure used in the CCA library. The CCA toolbox stores a plq function as an $(N+1) \times$4 matrix, where each row represents one interval on which the function is quadratic. 
   	
   	For example, the plq function $f : \R \rightarrow \R \cup\{+\infty\} $ defined by \eqref{eqn4.3}
   is stored as
   \begin{equation}
   \text{plq}f = \begin{bmatrix}
   x_{0} &&& a_{0} &&& b_{0} &&& c_{0} \\
   x_{1} &&& a_{1} &&& b_{1} &&& c_{1} \\
   \vdots&&& \vdots &&& \vdots &&& \vdots\\
   x_{N-1} &&& a_{N-1} &&& b_{N-1} &&& c_{N-1}\\
   +\infty &&& a_{N} &&& b_{N} &&& c_{N}
   \end{bmatrix}.
   \label{eqn4.5}
   \end{equation}
   Note that, if $c_{0} = +\infty$ or $c_{N} = +\infty$, then the structure demands that $a_{0} = b_{0} =0$ or $a_{N} = b_{N} =0$ respectively. If $f(x)$ is a simple quadratic function, then $N=0$ and $x_{0} = +\infty$. Finally, the special case of $f(x)$ being a shifted indicator function of a single point $\tilde{x} \in \R$,
   \begin{equation*}
   f(x) = \iota_{\{\tilde{x}\}} + c = \left\{
   \begin{array}{lc}
   c,  & x = \tilde{x}\\
   +\infty, & x \neq \tilde{x}
   \end{array}\right. \end{equation*}
   where $ c \in \mathbb{R}$, is stored as a single row vector plq$f$ = $ \begin{bmatrix}
   \tilde{x} && 0 && 0 && c
   \end{bmatrix}$.

   \begin{remark}
   Throughout this paper, we shall designate $f$ and $\mathrm{plq}f$ for the mathematical function and the corresponding plq matrix representation.
   \end{remark}

  \subsubsection{The plq\_epssub Algorithm}\label{Susubsec: 3.2.2}
  Following the plq data structure we rewrite Algorithm \ref{alg1} for the specific class of univariate convex plq functions. Prior to presenting the algorithm we establish its validity.

   \begin{theorem}
   \label{theorem 4.7}
	Let $f: \R \rightarrow \R \cup \{+\infty\}$ be a univariate convex plq function, $\bar{x} \in \mathrm{dom}(f)$ and $\epsilon > 0$. Let $\partial_{\epsilon}f(\bar{x}) = [v_{l},v_{u}]$. Then one of the following hold.
	\begin{enumerate}
		
	\item If $\mathrm{plq}f^{\ast} = \begin{bmatrix}
	s_{0} &&& \tilde{a}_{0} &&& \tilde{b}_{0} &&& \tilde{c}_{0} \end{bmatrix}$ and $s_{0} \in \R$, then $f^{\ast}(s) = \iota_{\{s_{0}\}}(s) + \tilde{c}_{0}$ and $\tilde{a}_{0}=\tilde{b}_{0}=0$; so $\mathrm{dom}(f^{\ast})=\{s_{0}\}$ and
	\begin{equation*}
	v_{l} =v_{u} = s_{0} .
	\end{equation*}
	In this case, $f$ must be a linear function, i.e., $f(x) = \langle s_{0}, x\rangle - \tilde{c}_{0} $ and $\tilde{c}_{0} \in \R$.
		
	\item	Otherwise, let
	\begin{equation*}
	\mathrm{plq}l =
	\begin{bmatrix}
	+\infty &&& 0 &&& \bar{x} &&& (\epsilon -f(\bar{x}))
	\end{bmatrix}
	\end{equation*}
	and
	\begin{equation*}
	\mathrm{plq}m =
	\begin{bmatrix}
	\hat{s}_{0} &&& \hat{a}_{0} &&& \hat{b}_{0} &&& \hat{c}_{0} \\
	\hat{s}_{1} &&& \hat{a}_{1} &&& \hat{b}_{1} &&& \hat{c}_{1} \\
	\vdots &&& \vdots &&& \vdots &&& \vdots \\
	\hat{s}_{k-1} &&& \hat{a}_{k-1} &&& \hat{b}_{k-1} &&& \hat{c}_{k-1} \\
	\hat{s}_{k} &&& \hat{a}_{k} &&& \hat{b}_{k} &&& \hat{c}_{k} \\
	\end{bmatrix}
	\end{equation*}
	be the respective plq representations of 
	$l_{\bar{x}}(s) = \epsilon - f(\bar{x}) + \langle s, \bar{x} \rangle$
	and 	$	m(s) = \min\{f^{\ast}(s),l_{\bar{x}}(s)\}$.
	Then the following situations hold.
	\begin{enumerate}
	\label{case 2(a)}
	\item If $k=0$, then $\hat{s}_{0} = +\infty$ and
  \begin{equation*}
	v_{l} =
	-\infty, \quad
	 v_{u} =
	 +\infty .
	\end{equation*}
	In this case, $f$ must be the indicator function of $\bar{x}$ plus a constant, i.e., $f(x) = \iota_{\{\bar{x}\}} - \hat{c}_{0} $ and $\hat{c}_{0} \in \R$.
	\item If $k \geq 1$, then $\hat{s}_{0} \in \R, \hat{s}_{k-1} \in \R $,
	\begin{equation*}
	v_{l} =      \left\{
	\begin{array}{ll}
	\hat{s}_{0}, & \mathrm{if} \quad \begin{bmatrix}\hat{a}_{0} &&& \hat{b}_{0} &&& \hat{c}_{0} \end{bmatrix} = \begin{bmatrix}0 &&& \bar{x} &&& (\epsilon - f(\bar{x}))\end{bmatrix} \\
	-\infty, &  \mathrm{otherwise}\\
	\end{array}\right.
	\end{equation*}
	and
	 \begin{equation*}
	v_{u} =      \left\{
	\begin{array}{ll}
	\hat{s}_{k-1}, & \mathrm{if} \quad \begin{bmatrix}\hat{a}_{k} &&& \hat{b}_{k} &&& \hat{c}_{k} \end{bmatrix} = \begin{bmatrix}0 &&& \bar{x} &&& (\epsilon - f(\bar{x}))\end{bmatrix} \\
	+\infty, &  \mathrm{otherwise}\\
	\end{array}\right. .
	\end{equation*}
    \end{enumerate}
	\end{enumerate}
    \end{theorem}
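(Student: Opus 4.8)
The plan is to start from Proposition \ref{thm: 3.1}, which tells us that $\partial_\epsilon f(\bar x) = \{s : m(s) = f^\ast(s)\}$, and then translate each of the three cases into statements about the plq data structures. The overarching observation is that $\partial_\epsilon f(\bar x)$ is a closed interval $[v_l, v_u]$ (possibly unbounded, possibly a singleton), since it is a level set of the convex function $f^\ast - l_{\bar x}$ restricted to $\dom f^\ast$; so the task reduces to identifying its two endpoints from the matrices $\mathrm{plq}f^\ast$, $\mathrm{plq}l$, and $\mathrm{plq}m$.

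For Case 1, I would argue that $\mathrm{plq}f^\ast$ being a single row with $s_0 \in \R$ forces $\dom(f^\ast) = \{s_0\}$: the data-structure conventions stated in Section \ref{Susubsec: 3.2.1} require $\tilde a_0 = \tilde b_0 = 0$ whenever the single interval is bounded (here $[s_0,s_0]$ collapses), so $f^\ast = \iota_{\{s_0\}} + \tilde c_0$. Then $\partial_\epsilon f(\bar x) \subseteq \dom f^\ast = \{s_0\}$ gives $v_l = v_u = s_0$, provided the interval is nonempty; nonemptiness follows since $\bar x \in \dom f$ and $\epsilon > 0$ guarantee $\partial_\epsilon f(\bar x) \neq \emptyset$. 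Biconjugating, $f = f^{\ast\ast} = (\iota_{\{s_0\}} + \tilde c_0)^\ast$ is the affine function $x \mapsto \langle s_0, x\rangle - \tilde c_0$, and $\tilde c_0 \in \R$ since $f$ is proper.

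In Case 2 the conjugate has more than one piece (or is a genuine quadratic), so $l_{\bar x}$ — an affine function with slope $\bar x$ and intercept $\epsilon - f(\bar x)$ — has the stated one-row plq representation, and $m = \min\{f^\ast, l_{\bar x}\}$ is computed by \texttt{plq\_min}. For sub-case 2(a), $k = 0$ means $m$ is a single unbounded piece; since $m \le l_{\bar x}$ everywhere and $m = l_{\bar x}$ exactly on $\partial_\epsilon f(\bar x)$, the only way a single affine-or-quadratic piece can agree with $l_{\bar x}$ on the whole line is $m \equiv l_{\bar x}$, i.e. $f^\ast \ge l_{\bar x}$ identically, so $\partial_\epsilon f(\bar x) = \R$ and $v_l = -\infty$, $v_u = +\infty$; dually $f^\ast(s) \ge \epsilon - f(\bar x) + \langle s, \bar x\rangle$ for all $s$ means $f^{\ast\ast}(\bar x) \le f(\bar x) - \epsilon$, which combined with $f^{\ast\ast} \le f$ and properness pins $f$ down as $\iota_{\{\bar x\}} - \hat c_0$. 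For sub-case 2(b), the key fact is that $\partial_\epsilon f(\bar x)$ is exactly the union of the (closed) pieces of $\mathrm{plq}m$ on which the quadratic coincides with $l_{\bar x}$; because $f^\ast$ is convex and $l_{\bar x}$ affine, the set $\{f^\ast \le l_{\bar x}\}$ is a single interval, so this union is a contiguous block of rows, say rows $0$ through $k-1$ — here one has to be slightly careful that the convention indexes the $i$-th row by the right endpoint $\hat s_i$, so the interval is $[\hat s_{-1}, \hat s_{k-1}]$ with $\hat s_{-1} := -\infty$. The left endpoint is finite precisely when the first piece of $m$ is $l_{\bar x}$ rather than $f^\ast$, i.e. when $(\hat a_0, \hat b_0, \hat c_0) = (0, \bar x, \epsilon - f(\bar x))$, in which case the breakpoint $\hat s_0$ is where $f^\ast$ overtakes $l_{\bar x}$ on the left, giving $v_l = \hat s_0$; symmetrically for $v_u$ using the last row. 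I would also note that $k \ge 1$ forces the interior breakpoints $\hat s_0, \hat s_{k-1}$ to be real, since an unbounded first or last piece is the only row allowed to have $\hat s = \pm\infty$.

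The main obstacle I expect is the bookkeeping of the plq indexing convention in sub-case 2(b): matching "the rows where $m$ agrees with $l_{\bar x}$" to "$[v_l, v_u]$" requires knowing which breakpoint belongs to which piece and handling the degenerate possibilities (the agreement set being a single point, or $l_{\bar x}$ touching $f^\ast$ tangentially so that \texttt{plq\_min} may or may not insert a breakpoint). The convexity of $f^\ast$ is what rules out the pathological case of several disjoint agreement intervals, so I would state and use that reduction explicitly; once the agreement set is known to be a single contiguous run of rows, reading off $v_l$ and $v_u$ from the first and last rows is the routine verification the theorem describes.
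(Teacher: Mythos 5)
Your Case 1 is fine (and essentially the paper's argument: the one-row convention forces $f^{\ast}=\iota_{\{s_{0}\}}+\tilde{c}_{0}$, biconjugation makes $f$ affine), but sub-case 2(a) contains a genuine error: you have swapped the roles of $f^{\ast}$ and $l_{\bar{x}}$. By Proposition \ref{thm: 3.1}, $\partial_{\epsilon}f(\bar{x})=\{s: m(s)=f^{\ast}(s)\}$, so $m$ agrees with $f^{\ast}$ (not with $l_{\bar{x}}$) exactly on the $\epsilon$-subdifferential. Your chain ``$k=0$ forces $m\equiv l_{\bar{x}}$, i.e.\ $f^{\ast}\geq l_{\bar{x}}$ identically, so $\partial_{\epsilon}f(\bar{x})=\R$'' fails twice: first, $f^{\ast}\geq l_{\bar{x}}$ everywhere would give $\partial_{\epsilon}f(\bar{x})=\{f^{\ast}\leq l_{\bar{x}}\}=\{f^{\ast}=l_{\bar{x}}\}$, not $\R$ (the correct equivalence is $\partial_{\epsilon}f(\bar{x})=\R$ iff $f^{\ast}\leq l_{\bar{x}}$ everywhere); second, $m\equiv l_{\bar{x}}$ can never happen, since $l_{\bar{x}}\leq f^{\ast}$ everywhere yields $f(\bar{x})=f^{\ast\ast}(\bar{x})=\sup_{s}\{s\bar{x}-f^{\ast}(s)\}\leq\sup_{s}\{s\bar{x}-l_{\bar{x}}(s)\}=f(\bar{x})-\epsilon$, a contradiction because $\epsilon>0$. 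Indeed your own ``dually\dots'' computation produces exactly this contradiction, so it cannot ``pin $f$ down as $\iota_{\{\bar{x}\}}-\hat{c}_{0}$''; it only shows the branch you chose is vacuous. (This impossibility is precisely Lemma \ref{lemma 4.8}(i) of the paper.)

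The correct route for $k=0$ is the opposite branch: since some $\bar{s}$ always satisfies $m(\bar{s})<l_{\bar{x}}(\bar{s})$, a one-row $\mathrm{plq}m$ forces $m\equiv f^{\ast}$, i.e.\ $f^{\ast}\leq l_{\bar{x}}$ everywhere, which is equivalent to $\partial_{\epsilon}f(\bar{x})=\R$; the indicator form of $f$ then follows because $s(y-\bar{x})\leq\epsilon+f(y)-f(\bar{x})$ for all $s$ is impossible for any $y\neq\bar{x}$ in $\dom f$, so $\dom f=\{\bar{x}\}$ (Lemma \ref{lemma 4.8}(ii)). The same swap reappears at the start of your 2(b) (``$\partial_{\epsilon}f(\bar{x})$ is the union of the pieces on which $m$ coincides with $l_{\bar{x}}$'' should say $f^{\ast}$), although the endpoint rules you then state are the right ones. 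Two further points you assert but must justify: that $k=0$ implies $\hat{s}_{0}=+\infty$ (a finite $\hat{s}_{0}$ in a one-row matrix would make $m$ an indicator function, impossible since $m\leq l_{\bar{x}}<+\infty$), and that when the first row of $\mathrm{plq}m$ carries the data of $l_{\bar{x}}$ the next piece really is $f^{\ast}$, which is where the paper invokes Remark \ref{remark 4.12} (plq\_min returns a minimal representation, so consecutive rows have distinct coefficient triples).
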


     In order, to prove Theorem \ref{theorem 4.7}, we require the following lemmas.

   \begin{lemma}
   	\label{itm:third}
   	If $f: \Rn \rightarrow \R$ has the form $f= \langle a,x \rangle + b$ where, $a \in \Rn$ and $ b \in \R$, then for $\epsilon \geq 0$ and $\tilde{x} \in \Rn$
   	\begin{equation*}
   	\partial_{\epsilon} f(\tilde{x}) = \partial f(\tilde{x}) = \{\nabla f(\tilde{x})\} =\{a\}.
   	\end{equation*}
   \end{lemma}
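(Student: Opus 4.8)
\textbf{Proof proposal for Lemma \ref{itm:third}.}
The statement is elementary and I would prove it by unwinding the definitions directly, establishing the two inclusions $\{a\}\subseteq\partial_{\epsilon}f(\tilde{x})$ and $\partial_{\epsilon}f(\tilde{x})\subseteq\{a\}$, and then collapsing the whole chain.

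First I would record that for $f(x)=\langle a,x\rangle+b$ the inequality defining $\partial_{\epsilon}f(\tilde{x})$, namely $f(y)\geq f(\tilde{x})+\langle s,y-\tilde{x}\rangle-\epsilon$ for all $y$, simplifies: substituting $f(y)-f(\tilde{x})=\langle a,y-\tilde{x}\rangle$ it becomes $\langle a-s,\,y-\tilde{x}\rangle\geq-\epsilon$ for all $y\in\Rn$. For the easy inclusion, take $s=a$: the left side is identically $0\geq-\epsilon$ since $\epsilon\geq0$, so $a\in\partial_{\epsilon}f(\tilde{x})$. For the reverse inclusion, suppose $s\in\partial_{\epsilon}f(\tilde{x})$ and assume toward a contradiction that $s\neq a$; since $y$ ranges over all of $\Rn$, the vector $y-\tilde{x}$ ranges over all of $\Rn$, so choosing $y-\tilde{x}=t(s-a)$ with $t>0$ gives $\langle a-s,\,y-\tilde{x}\rangle=-t\lVert a-s\rVert^{2}\to-\infty$ as $t\to\infty$, violating the bound $\geq-\epsilon$. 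Hence $s=a$, which gives $\partial_{\epsilon}f(\tilde{x})=\{a\}$.

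Finally I would assemble the stated equalities: since $\epsilon\geq0$ was arbitrary (including $\epsilon=0$), the computation above with $\epsilon=0$ yields $\partial f(\tilde{x})=\{a\}$; alternatively one may invoke the inclusion $\partial f(\tilde{x})\subseteq\partial_{\epsilon}f(\tilde{x})$ noted after Definition \ref{Def:2} together with $a\in\partial f(\tilde{x})$ (the $\epsilon=0$ case of the easy inclusion) to sandwich $\partial f(\tilde{x})$ between $\{a\}$ and $\{a\}$. Since $f$ is affine it is differentiable with $\nabla f(\tilde{x})=a$, so $\partial_{\epsilon}f(\tilde{x})=\partial f(\tilde{x})=\{\nabla f(\tilde{x})\}=\{a\}$, as claimed.

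There is no real obstacle here; the only point requiring a sentence of care is the unboundedness argument in the second inclusion, where one must exploit that $y$ (equivalently $y-\tilde{x}$) runs over the whole space $\Rn$ with no constraint, so that $\langle a-s,\cdot\rangle$ is unbounded below unless $a-s=0$.
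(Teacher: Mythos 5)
Your proof is correct. The paper actually states this lemma without any proof, treating it as elementary, so there is nothing to compare against; your direct verification from Definition \ref{Def:2} --- reducing the subgradient inequality to $\langle a-s,\,y-\tilde{x}\rangle\geq-\epsilon$ for all $y$, and using unboundedness of the linear form to force $s=a$ --- is exactly the standard argument one would supply, and it correctly covers the case $\epsilon=0$ to get the full chain of equalities.
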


   \begin{lemma}
   \label{lemma 4.8}
   Let $f: \R \rightarrow \R \cup \{+\infty\}$ be a proper convex plq function, $\bar{x} \in \dom (f)$ and $\epsilon > 0$. Define $l_{\bar{x}}(s) = \epsilon-f(\bar{x}) + \langle s,\bar{x} \rangle$ and $m(s) = \min\{f^{\ast}(s),l_{\bar{x}}(s)\}$. Then
	\begin{enumerate}
		\item[(i)] There exists $s \in \R$ such that $m(s) < l_{\bar{x}}(s)$. \label{li}
		\item[(ii)] We have \label{lii}
		\begin{equation}
   	\label{Equation 4.8}
   	m  \equiv f^{\ast} \iff f^{\ast} \equiv \langle a,\cdot \rangle + b
   	\end{equation}
   	where $a =\bar{x}$, $b \leq \epsilon - f(\bar{x})$, and $m  \equiv f^{\ast}$ means for all $s$, $m(s)  \equiv f^{\ast}(s)$.
   	In this case, $f$ must be an indicator function, $f \equiv \iota_{\{\bar{x}\}} - b$ for $b \in \R$, and therefore $\partial_{\epsilon}f(\bar{x}) = \R$.
	\end{enumerate}

   \end{lemma}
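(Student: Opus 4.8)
The plan is to treat the two parts separately, both resting on the fact that a proper convex plq function is closed: its domain, being a finite union of polyhedral (hence closed) sets, is closed, and $f$ is continuous on it, so $f$ is lower semicontinuous; consequently $f=f^{\ast\ast}$, and $f^{\ast}$ is itself a proper closed convex function (hence, in particular, nowhere $-\infty$). I will also use the elementary observation that, since $m=\min\{f^{\ast},l_{\bar{x}}\}\le l_{\bar{x}}$ pointwise, the inequality $m(s)<l_{\bar{x}}(s)$ is equivalent to $f^{\ast}(s)<l_{\bar{x}}(s)$, and likewise $m\equiv f^{\ast}$ is equivalent to $f^{\ast}(s)\le l_{\bar{x}}(s)$ for all $s$.

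For (i), I would look at $h(s):=f^{\ast}(s)-\langle s,\bar{x}\rangle$ and note that $\inf_{s}h(s)=-\sup_{s}\{\langle s,\bar{x}\rangle-f^{\ast}(s)\}=-f^{\ast\ast}(\bar{x})=-f(\bar{x})$, which is finite since $\bar{x}\in\dom(f)$. Because $\epsilon>0$, this infimum $-f(\bar{x})$ lies strictly below $\epsilon-f(\bar{x})$, so by the definition of infimum some $s$ satisfies $h(s)<\epsilon-f(\bar{x})$, i.e. $f^{\ast}(s)<l_{\bar{x}}(s)$, and hence $m(s)=f^{\ast}(s)<l_{\bar{x}}(s)$.

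For (ii), the implication ``$\Leftarrow$'' is immediate: if $f^{\ast}(s)=\langle a,s\rangle+b$ with $a=\bar{x}$ and $b\le\epsilon-f(\bar{x})$, then $f^{\ast}(s)\le l_{\bar{x}}(s)$ for every $s$, so $m\equiv f^{\ast}$. For ``$\Rightarrow$'', assume $m\equiv f^{\ast}$, so $f^{\ast}(s)\le l_{\bar{x}}(s)<+\infty$ for all $s$; then $h(s)=f^{\ast}(s)-\langle s,\bar{x}\rangle$ is a finite-valued convex function on all of $\R$ bounded above by $\epsilon-f(\bar{x})$. A finite convex function on $\R$ that is bounded above must be constant---otherwise, along a ray in a direction of strict increase its values would be unbounded by convexity---so $h\equiv b$ with $b\le\epsilon-f(\bar{x})$, which says exactly that $f^{\ast}\equiv\langle a,\cdot\rangle+b$ with $a=\bar{x}$. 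For the closing claims I would then compute $f=f^{\ast\ast}=(\langle\bar{x},\cdot\rangle+b)^{\ast}$, which equals $-b$ at $\bar{x}$ and $+\infty$ elsewhere, so $f\equiv\iota_{\{\bar{x}\}}-b$ with $b\in\R$; and by Proposition~\ref{thm: 3.1}, $\partial_{\epsilon}f(\bar{x})=\{s:m(s)=f^{\ast}(s)\}=\R$.

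None of the steps is deep; the points that need care are (a) recording at the outset that a proper convex plq function is closed, so that $f^{\ast\ast}=f$ is legitimate throughout, and (b) in the ``$\Rightarrow$'' part of (ii), first noting that $f^{\ast}\le l_{\bar{x}}$ everywhere forces $\dom(f^{\ast})=\R$ before invoking the ``bounded above $\Rightarrow$ constant'' argument for convex functions on the line.
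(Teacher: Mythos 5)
Your proof is correct, and it diverges from the paper's mainly in part (ii). For (i) you and the paper rely on the same ingredient---$f=f^{**}$ for a closed proper convex plq function---but you phrase it directly ($\inf_s\{f^{\ast}(s)-s\bar{x}\}=-f^{**}(\bar{x})=-f(\bar{x})<\epsilon-f(\bar{x})$, so some $s$ satisfies $f^{\ast}(s)<l_{\bar{x}}(s)$), whereas the paper argues by contradiction, deriving $f(\bar{x})=\sup_s\{s\bar{x}-f^{\ast}(s)\}\leq\sup_s\{s\bar{x}-l_{\bar{x}}(s)\}=f(\bar{x})-\epsilon$; these are the same computation in two guises. For (ii) the paper first rewrites $m\equiv f^{\ast}$ as $\partial_{\epsilon}f(\bar{x})=\R$, then works on the primal side: if some $y\neq\bar{x}$ lay in $\dom f$, then $\epsilon+f(y)-f(\bar{x})\geq s(y-\bar{x})$ for all $s$ would be impossible, so $\dom f$ is a singleton, $f$ is an indicator, and $f^{\ast}$ is affine; only afterwards does it invoke the fact that a finite convex function bounded above on $\R$ is constant (applied to $g=f^{\ast}-l_{\bar{x}}$) to identify $a=\bar{x}$ and $b\leq\epsilon-f(\bar{x})$. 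You stay entirely on the dual side: $f^{\ast}\leq l_{\bar{x}}$ forces $\dom f^{\ast}=\R$, and a single application of the same constancy fact to $h=f^{\ast}-\langle\cdot,\bar{x}\rangle$ produces the affine form with the correct slope and intercept in one stroke, after which the indicator form of $f$ follows by biconjugation and $\partial_{\epsilon}f(\bar{x})=\R$ by Proposition~\ref{thm: 3.1}. Your route is slightly more economical---one boundedness argument instead of a primal unboundedness argument plus a separate constancy argument---and it delivers $a=\bar{x}$, $b\leq\epsilon-f(\bar{x})$ as part of the main step rather than as a final adjustment; the paper's route has the merit of making the primal content ($\dom f$ a singleton, $\partial_{\epsilon}f(\bar{x})=\R$) explicit before touching $f^{\ast}$. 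Your opening observation that a proper convex plq function is lower semicontinuous (closed domain, continuity on the domain), so that $f=f^{**}$ is legitimate, is exactly the hypothesis the paper uses implicitly when citing the biconjugation theorem, so no gap there.
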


   \begin{proof}	
	We prove (i) by contradiction. Suppose $m(s) = l_{\bar{x}}(s)$, for all $s \in \R$, i.e. $l_{\bar{x}}(s) \leq f^{\ast}(s)$, for all $s \in \R$. Then using \cite[Theorem 11.1]{rockafellar2009variational} we obtain
	\[
	f(\bar{x}) = \sup_s \{s\bar{x}-f^*(s)\}
\leq \sup_s \{s\bar{x}-l(s)\}
= \sup_s \{ s \bar{x} - \epsilon + f(\bar{x})- s \bar{x} \}
 = f(\bar{x})-\epsilon
	\]
	and that contradiction proves the lemma.

	For (ii), we have
	\begin{align*}
		m \equiv f^*
		& \Leftrightarrow f^* \leq l_{\bar{x}}, \\
		& \Leftrightarrow s x-f(x) \leq \epsilon -f(\bar{x}) + s\bar{x}, \text{ for all $s, x$},\\
		& \Leftrightarrow s\in \partial_\epsilon f(\bar{x}), \text{ for all $s$},\\
		& \Leftrightarrow \partial_\epsilon f(\bar{x}) = \R.
	\end{align*}
	Now assume there is $y\neq x \in \dom f$. Then $\epsilon + f(y) -f(\bar{x}) \geq s(y-\bar{x})$ for all $s$, which is not possible since the left hand-side is bounded and the right one unbounded. Hence, $\dom f$ is a singleton i.e. $f$ is an indicator function. Conversely, if $f$ is an indicator, the equivalence holds. Since the conjugate of the indicator function of a singleton is linear, we further obtain
	\[
		m \equiv f^*
			 \Leftrightarrow f \equiv \iota_{a} - b,
			 \Leftrightarrow f^{\ast} \equiv \langle a,\cdot \rangle + b.
	\]
	The fact that $a =\bar{x}$ is deduced from $g\equiv f^*-l_{\bar{x}} \leq 0$ ($g$ is a convex function defined everywhere and upper bounded, hence a constant~\cite[Corollary 8.6.2]{rockafellar2015convex}). The fact $b \leq \epsilon - f(\bar{x})$ follows similarly.
  \qed \end{proof}

   \begin{remark}
   	\label{remark 4.12}
   	Let $f: \R \rightarrow \R \cup \{+\infty\}$ be a proper convex plq function, $\bar{x} \in \mathrm{dom}(f)$ and $\epsilon > 0$. Let $\mathrm{plq}m$ be the representation of $m(s) = \min\{f^{\ast}(s),l_{\bar{x}}(s)\}$. Suppose
   	\begin{equation*}
   	\mathrm{plq}m =
   	\begin{bmatrix}
   	s_{0} &&& a_{0} &&& b_{0} &&& c_{0} \\
   	s_{1} &&& a_{1} &&& b_{1} &&& c_{1} \\
   	\vdots &&& \vdots &&& \vdots &&& \vdots \\
   	s_{k-1} &&& a_{k-1} &&& b_{k-1} &&& c_{k-1} \\
   	s_{k} &&& a_{k} &&& b_{k} &&& c_{k} \\
   	\end{bmatrix} \quad \mathrm{with} \quad k \geq 1,
   	\end{equation*}
   	where $s_{0} < s_{1},\hdots,s_{k-1} < s_{k} = +\infty$. Then, for any $i \in \{0,1,\hdots,k-1\}$
   	\begin{equation*}
   	\begin{bmatrix}
   	a_{i} &&& b_{i} &&& c_{i}
   	\end{bmatrix}
   	\neq
   	\begin{bmatrix}
   	a_{i+1} &&& b_{i+1} &&& c_{i+1}
   	\end{bmatrix}.
   	\end{equation*}
   This is because the plq\_min function creates the smallest matrix representation of the minimum of two plq functions.  If $[a_{i} ~ b_{i} ~ c_{i}]$ were equal to $[a_{i+1} ~ b_{i+1} ~ c_{i+1}]$, then the $i+1$-th row would be redundant, so not constructed by the plq\_min function.
   \end{remark}

   Now, we turn to the formal proof of Theorem  \ref{theorem 4.7}.

     \begin{proof}[of Theorem \ref{theorem 4.7}]
     	Note that $\bar{x} \in \mathrm{dom}(f)$ and $\epsilon > 0$ so $\partial_{\epsilon}f(\bar{x}) \neq \emptyset$. 

     	    \hspace{-15pt}\textit{Case $1$}
     		Suppose $\mathrm{plq}f^{\ast} = \begin{bmatrix}
     		s_{0} &&& \tilde{a}_{0} &&& \tilde{b}_{0} &&& \tilde{c}_{0} \end{bmatrix}$ with $s_{0} \in \R$.
     		By definition, $f^{\ast}$ is an indicator function, i.e., $f^{\ast}(s) = \iota_{\{s_{0}\}} + \tilde{c}_{0}$. It also follows that $\tilde{a}_0 = \tilde{b}_{0} = 0$ and $\mathrm{dom}(f^{\ast}) = \{s_{0}\}$. This immediately yields $f^{\ast\ast}(x) = \langle s_{0}, x\rangle - \tilde{c}_{0}$. Since $f$ is  a proper convex plq function, we have that $f = f^{\ast\ast}$ \cite[Theorem 11.1]{rockafellar2009variational}, so $f(x)  = \langle s_{0}, x\rangle - \tilde{c}_{0}$
     		and 
$\partial_{\epsilon}f(\bar{x}) = \{s_{0}\}$.
     		Hence, $v_{l} = v_{u} = \{s_{0}\}$.

     	    \hspace{-15pt} \textit{Case $2(a)$}
     		Suppose \begin{equation*}
     		\mathrm{plq}m =
     		\begin{bmatrix}
     		\hat{s}_{0} &&& \hat{a}_{0} &&& \hat{b}_{0} &&& \hat{c}_{0} \\
     		\hat{s}_{1} &&& \hat{a}_{1} &&& \hat{b}_{1} &&& \hat{c}_{1} \\
     		\vdots &&& \vdots &&& \vdots &&& \vdots \\
     		\hat{s}_{k-1} &&& \hat{a}_{k-1} &&& \hat{b}_{k-1} &&& \hat{c}_{k-1} \\
     		\hat{s}_{k} &&& \hat{a}_{k} &&& \hat{b}_{k} &&& \hat{c}_{k} \\
     		\end{bmatrix}
     		\end{equation*} with $k =0$.
     		In this case,
     		\begin{equation*}
     		\mathrm{plq}m =
     		\begin{bmatrix}
     		\hat{s}_{0} &&& \hat{a}_{0} &&& \hat{b}_{0} &&& \hat{c}_{0}
     		\end{bmatrix}
     		\end{equation*}
     		then $\hat{s}_{0} = +\infty$. Indeed, if $\hat{s}_{0} \in \R$, then
     		\begin{equation*}
     		\mathrm{plq}m =
     		\begin{bmatrix}
     		\hat{s}_{0} &&& 0 &&& 0 &&& \hat{c}_{0}
     		\end{bmatrix} (= \iota_{\{\hat{s}_{0}\}} + \hat{c}_{0}),
     		\end{equation*}
     		which cannot happen as $m(s) = \min\{f^{\ast}(s),l_{\bar{x}}(s)\} \leq l_{\bar{x}}(s) < +\infty$.
     		
     		To see $v_{l} = -\infty, v_{u} = +\infty$, note that from Lemma \ref{lemma 4.8} there exists $ \bar{s} \in \R$ such that $m(\bar{s}) < l_{\bar{x}}(\bar{s})$.	
     		This implies
     		\begin{equation*}
     		m(s) = f^{\ast}(s) \quad \text{for all} \quad s,
     		\end{equation*}	
     		as otherwise
     		\begin{equation*}
     		m(s) =      \left\{
     		\begin{array}{ll}
     		f^{\ast}(s),  & s \in X_{f^{\ast}} \\
     		l_{\bar{x}}(s), & s \in X_{l}\\
     		\end{array}\right.
     		\end{equation*}
     		for some  $X_{f^{\ast}}, X_{l} \subseteq \R$ with $X_{l} \neq \emptyset$.	
     		But, then $m(s)$ would be a piecewise function defined on at least two intervals contradicting $ k =0$.	
     		So $m(s) = f^{\ast}(s)$ for all $s$, which by Lemma \ref{lemma 4.8} gives
     		\begin{equation*}
     		\partial_{\epsilon}f(\bar{x}) = \R.
     		\end{equation*}	
     		Therefore, $v_{l} = -\infty$ and $v_{u} +\infty$.
     		Consequently, from Lemma \ref{lemma 4.8}, $f$ must be the indicator function at $\bar{x}$ plus a constant, i.e., $f(x) = \iota_{\{\bar{x}\}} - \hat{c}_{0}$.

         	\hspace{-15pt} \textit{Case $2(b)$}
         	Suppose \begin{equation*}
     		\mathrm{plq}m =
     		\begin{bmatrix}
     		\hat{s}_{0} &&& \hat{a}_{0} &&& \hat{b}_{0} &&& \hat{c}_{0} \\
     		\hat{s}_{1} &&& \hat{a}_{1} &&& \hat{b}_{1} &&& \hat{c}_{1} \\
     		\vdots &&& \vdots && \vdots &&& \vdots \\
     		\hat{s}_{k-1} &&& \hat{a}_{k-1} &&& \hat{b}_{k-1} &&& \hat{c}_{k-1} \\
     		\hat{s}_{k} &&& \hat{a}_{k} &&& \hat{b}_{k} &&& \hat{c}_{k} \\
     		\end{bmatrix}
     		\end{equation*}with $k \geq 1$.
     		By definition, $\hat{s}_{0}, \hat{s}_{k-1} \in \R$. We consider two subcases to prove the formula for $v_l$.
     		
     		\hspace{-15pt} \textit{Subcase $\mathrm{(i)}$}
     			Suppose $\begin{bmatrix}\hat{a}_{0} &&& \hat{b}_{0} &&& \hat{c}_{0} \end{bmatrix} = \begin{bmatrix}0 &&& \bar{x} &&& (\epsilon - f(\bar{x}))\end{bmatrix}$.
     			
     			By definition, $m(s) = l_{\bar{x}}(s)$ for all $ s < \hat{s}_{0}$. From Remark \ref{remark 4.12} we have
     			\begin{equation*}
     			m(s) = f^{\ast}(s) \quad \text {for all} \quad \hat{s}_{0} \leq s \leq \hat{s}_{1}.
     			\end{equation*}
     			Therefore,
     			\begin{equation*}
     			\inf\{v \in \mathrm{dom}({f^{\ast}}) : m(v) = f^{\ast}(v)\} = \hat{s}_{0}.
     			\end{equation*}
     			So $v_{l} = \hat{s}_{0}$.
     		
     	    	\hspace{-15pt} \textit{Subcase $\mathrm{(ii)}$}
     			Suppose $\begin{bmatrix}\hat{a}_{0} &&& \hat{b}_{0} &&& \hat{c}_{0} \end{bmatrix} \neq \begin{bmatrix}0 &&& \bar{x} &&& (\epsilon - f(\bar{x}))\end{bmatrix}$.
     			
     			By definition, we have 
     			\begin{equation*}
     			m(s) = f^{\ast}(s) \quad \text{for all} \quad s < \hat{s}_{0}.
     			\end{equation*}
     			Therefore,
     			\begin{equation*}
     			\inf\{v \in \mathrm{dom}({f^{\ast}}) : m(v) = f^{\ast}(v)\} = -\infty.
     			\end{equation*}
     			So $v_{l} = -\infty$.
     	    	
            	\hspace{-15pt} The formula for $v_u$ can be proven analogously to that of $v_{l}$.
%
%
%
     		
   \qed  \end{proof}

       \begin{algorithm}[ht!]
       	\normalsize
       	\caption{plq\_epssub Algorithm}
       	\textbf{Input}: $\text{plq}f = \begin{bmatrix}
       	x_{0} && a_{0} && b_{0} && c_{0} \\
       	x_{1} && a_{1} && b_{1} && c_{1} \\
       	\vdots&& \vdots && \vdots && \vdots\\
       	x_{N-1} && a_{N-1} && b_{N-1} && c_{N-1}\\
       	+\infty && a_{N} && b_{N} && c_{N}
       	\end{bmatrix}$, $\bar{x}$, $\epsilon > 0$\\ 
       	\textbf{Output}: $\hat{v}_{l}, \hat{v}_{u}$
       	\begin{algorithmic}[1]
       		\STATE Compute plq\_check(plq$f$)\\
       		\textbf{if} false \textbf{return} `the input function is not plq.'
       		\STATE Compute plq\_isConvex(plq$f$) \\
       		\textbf{if} false \textbf{return} `the input function is not convex.'
       		\STATE Compute plq\_eval(plq$f$,$\bar{x}$) \\
       		\textbf{if} $+\infty$, \textbf{return} `$\bar{x}$ is not in the domain of the function.';\\
       		\STATE \textbf{if} $\mathrm{plq}$f$ = \begin{bmatrix}
       		x_{0} && 0 && 0 && c_{0} \end{bmatrix}$ then
       		\textbf{return} $v_{l} = -\infty$,  $v_{u} = +\infty$;\\
       		\STATE Compute plq$f^{\ast}$ = plq\_lft($\text{plq}f$):
       		\begin{equation*}
       		\text{plq}f^{\ast} = \begin{bmatrix}
       		s_{0} && \tilde{a}_{0} && \tilde{b}_{0} && \tilde{c}_{0} \\
       		s_{1} && \tilde{a}_{1} && \tilde{b}_{1} && \tilde{c}_{1} \\
       		\vdots&& \vdots && \vdots && \vdots\\
       		s_{\tilde{N}-1} && \tilde{a}_{\tilde{N}-1} && \tilde{b}_{\tilde{N}-1} && \tilde{c}_{\tilde{N}-1}\\
       		+\infty && \tilde{a}_{\tilde{N}} && \tilde{b}_{\tilde{N}} && \tilde{c}_{\tilde{N}}
       		\end{bmatrix}.
       		\end{equation*}
       		\textbf{if}
       		$\text{plq}f^{\ast} = \begin{bmatrix}
       		s_{0} && \tilde{a}_{0} && \tilde{b}_{0} && \tilde{c}_{0} \end{bmatrix}$ and $s_{0} \in \R$ \textbf{return} $\hat{v}_{l} = s_{0}, \hat{v}_{u} = s_{0}$;
       		\STATE Define 
       		\begin{equation*}
       		\text{plq}l=
       		\begin{bmatrix}
       		+\infty && 0 & \bar{x} && (\epsilon - \text{plq\_eval}(\text{plq}f,\bar{x})) \end{bmatrix}.
       		\end{equation*} \\
       		\STATE Compute plq$m$ = plq\_min$(\text{plq}f^{\ast},\text{plq}l)$:
       		\begin{equation*}
       		\text{plq}m = \begin{bmatrix}
       		\hat{s}_{0} && \hat{a}_{0} && \hat{b}_{0} && \hat{c}_{0} \\
       		\hat{s}_{1} && \hat{a}_{1} && \hat{b}_{1} && \hat{c}_{1} \\
       		\vdots&& \vdots && \vdots && \vdots\\
       		\hat{s}_{k-1} && \hat{a}_{k-1} && \hat{b}_{k-1} && \hat{c}_{k-1}\\
       		+\infty && \hat{a}_{k} && \hat{b}_{k} && \hat{c}_{k}
       		\end{bmatrix},
       		\end{equation*}
       		where $\hat{s}_{0} < \hat{s}_{1}, \hdots,< \hat{s}_{k-1} < \hat{s}_{k} = +\infty.$
       		
       		\STATE Compute $\hat{v}_{l}, \hat{v}_{u}$: \\
       		If $k=0$, \\
       		\begin{center}
       			$\hat{v}_{l} = -\infty, \quad \hat{v}_{u} = +\infty$
       		\end{center}
       		If $k\geq1$,
       		\begin{equation*}
       		\hat{v}_{l} =      \left\{
       		\begin{array}{ll}
       		\hat{s}_{0}, & \mathrm{if} \quad \begin{bmatrix}\hat{a}_{0} && \hat{b}_{0} && \hat{c}_{0} \end{bmatrix} = \begin{bmatrix}0 && \bar{x} && (\epsilon - \text{plq\_eval}(\text{plq}f,\bar{x}))\end{bmatrix} \\
       		-\infty, &  \mathrm{otherwise}\\
       		\end{array}\right.
       		\end{equation*}
       		and
       		\begin{equation*}
       		\hat{v}_{u} =      \left\{
       		\begin{array}{ll}
       		\hat{s}_{k-1}, & \mathrm{if} \quad \begin{bmatrix} \hat{a}_{k} && \hat{b}_{k} && \hat{c}_{k} \end{bmatrix} = \begin{bmatrix}0 && \bar{x} && (\epsilon - \text{plq\_eval}(\text{plq}f,\bar{x}))\end{bmatrix} \\
       		+\infty, &  \mathrm{otherwise}\\
       		\end{array}\right.
       		\end{equation*}
       	\end{algorithmic}	
       	\label{alg2}
       \end{algorithm}

The details of the computation of the $\epsilon$-subdifferential of a univariate convex plq function are presented in Algorithm~\ref{alg2}. Before looking into the complexity of the algorithm, we note a minor difference between the algorithm implementation and Theorem~\ref{theorem 4.7}.

      \begin{remark}
      	In our implementation, the Case \ref{case 2(a)}$(a)$ of Theorem \ref{theorem 4.7} is coded by detecting if $f$ is an indicator function. That is, if $\text{plq}f = \begin{bmatrix}x_{0} &&& 0 &&& 0 &&& c_{0} \end{bmatrix},
      	$ where $x_{0} \in \R$, then $v_{l} = -\infty$ and $v_{u} = +\infty$ (Lemma \ref{lemma 4.8}) without computing plq\_min.
      \end{remark}
			
       \subsubsection{Complexity of Algorithm \ref{alg2}}
       In order to prove the complexity of Algorithm \ref{alg2}, we require the following lemma.

        \begin{lemma}
        	\label{lemma 4.15}
        	If \text{plq}$f$ has (N+1) rows then \text{plq}$f^{\ast}$ has $O(N)$ rows.
        \end{lemma}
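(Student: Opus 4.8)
The plan is to use the standard duality between the subdifferentials of $f$ and $f^{\ast}$ to control how many quadratic pieces the conjugate can carry. First I would recall that, since $f$ is a proper convex plq function, so is $f^{\ast}$ \cite[Proposition 5.1]{lucet2009piecewise}, so $\mathrm{plq}f^{\ast}$ is well defined; and its number of rows equals, up to an additive constant accounting for the (at most two) unbounded tail pieces on which $f^{\ast}$ may be $+\infty$, the number of maximal intervals of $\mathrm{dom}(f^{\ast})$ on which $f^{\ast}$ agrees with a single quadratic $\tilde a s^{2}+\tilde b s+\tilde c$ (redundant consecutive rows being removed, cf. Remark \ref{remark 4.12}). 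Thus it suffices to bound this number of intervals by $O(N)$.

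Next I would describe $\mathrm{gph}\,\partial f$. On each of the $N+1$ quadratic pieces $q_{i}(x)=a_{i}x^{2}+b_{i}x+c_{i}$ (with $a_{i}\geq 0$ by convexity) the single-valued derivative $q_{i}'(x)=2a_{i}x+b_{i}$ traces one line segment of slope $2a_{i}\geq 0$, while at each of the $N$ breakpoints $x_{i}$ the subdifferential is the interval $[\,q_{i}'(x_{i}),\,q_{i+1}'(x_{i})\,]$, contributing a (possibly degenerate) vertical segment. Hence $\mathrm{gph}\,\partial f$ is a nondecreasing planar curve made of at most $2N+1$ line segments. Since $f$ is lower semicontinuous we have $f=f^{\ast\ast}$ \cite[Theorem 11.1]{rockafellar2009variational}, and the Fenchel--Young inversion $s\in\partial f(x)\iff x\in\partial f^{\ast}(s)$ shows that $\mathrm{gph}\,\partial f^{\ast}$ is exactly the reflection of $\mathrm{gph}\,\partial f$ across the diagonal $\{x=s\}$; it is therefore again a monotone curve with at most $2N+1$ segments, in which segments of slope $2a_{i}>0$ become segments of slope $1/(2a_{i})$, horizontal segments coming from affine pieces of $f$ become vertical, and the vertical segments coming from breakpoints of $f$ become horizontal.

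Finally I would count the quadratic pieces of $f^{\ast}$. On the interior of a maximal interval where $f^{\ast}$ follows one quadratic formula, $\partial f^{\ast}$ is a single affine selection, i.e. one non-vertical segment of $\mathrm{gph}\,\partial f^{\ast}$; two adjacent such intervals are separated either by a genuine kink of $f^{\ast}$ (a nondegenerate vertical segment of $\mathrm{gph}\,\partial f^{\ast}$) or by a point where the curve turns from one non-vertical segment to another of different slope. In either case that transition point is an endpoint of one of the at most $2N+1$ segments of $\mathrm{gph}\,\partial f^{\ast}$, so there are $O(N)$ of them, $f^{\ast}$ has $O(N)$ quadratic pieces, and $\mathrm{plq}f^{\ast}$ has $O(N)$ rows. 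The hard part is not the idea but the bookkeeping of the degenerate cases: half-lines on the two unbounded pieces of $\mathrm{gph}\,\partial f$, the situations where $f$ or $f^{\ast}$ collapses to an affine function or to an indicator (Cases $1$ and $2(a)$ of Theorem \ref{theorem 4.7}), and making sure that ``corners without a jump'' in $\mathrm{gph}\,\partial f^{\ast}$ are charged to a segment endpoint; none of these affects the linear-in-$N$ conclusion, but each must be checked for a clean argument.
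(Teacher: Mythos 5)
Your argument is correct, but it proves the lemma by a genuinely different route than the paper. The paper's proof is a one-line algorithmic observation: the plq\_lft routine processes the $(N+1)$ rows of $\mathrm{plq}f$ independently and runs in $O(N)$ time \cite[Table 2]{lucet2009piecewise}, \cite[Table 2]{GARDINER-11}, so its output simply cannot have more than $O(N)$ rows. You instead give a self-contained convex-analytic proof: $\mathrm{gph}\,\partial f$ is a monotone polyline with at most $2N+1$ segments (one per quadratic piece, one vertical segment per breakpoint), conjugation reflects this graph across the diagonal via $s\in\partial f(x)\iff x\in\partial f^{\ast}(s)$ (valid since a proper convex plq function is lsc, so $f=f^{\ast\ast}$), and each maximal quadratic piece of $f^{\ast}$ is charged to a segment or segment endpoint of the reflected graph, giving $O(N)$ pieces plus at most two infinite tails. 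What your approach buys is independence from the implementation: it yields an explicit bound (roughly $2N+3$ rows) that holds for any correct conjugate routine, and it does not presuppose that linear running time forces linear output size. What the paper's approach buys is brevity and the fact that the bound is exactly the quantity needed for the complexity analysis of Algorithm \ref{alg2}, at the cost of leaning on the cited properties of plq\_lft. The only soft spot in your write-up is that the degenerate cases (indicator or affine $f$, unbounded domain rays, corners of $\mathrm{gph}\,\partial f^{\ast}$ without a jump) are flagged rather than carried out; they do all check out and none affects the $O(N)$ conclusion, but a complete version of your proof should verify them explicitly.
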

        \begin{proof}
        	Since plq\_lft algorithm is developed to independently operate on $(N+1)$ rows \cite[Table 2]{lucet2009piecewise} and has complexity of $O(N)$ \cite[Table 2]{GARDINER-11}, therefore the size of the output plq$f^{\ast}$ cannot exceed $O(N)$.
        \end{proof} \qed

       We now turn to the complexity of Algorithm \ref{alg2}.

        \begin{proposition}
        	\label{prop 4.6}
        	If \text{plq}$f$ has (N+1) rows then Algorithm \ref{alg2} runs in $O(N)$ time and space.
        \end{proposition}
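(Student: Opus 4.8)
The plan is to walk through Algorithm~\ref{alg2} line by line, bound the time and space cost of each step in terms of $N$, and then sum. The one structural input we really need is Lemma~\ref{lemma 4.15}, which guarantees that $\mathrm{plq}f^{\ast}$ has $O(N)$ rows; with that in hand, the whole argument reduces to checking that no subsequent step blows up this size or takes super-linear time.

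First I would dispatch the cheap steps. Steps~1--3 (plq\_check, plq\_isConvex, and plq\_eval at the single point $\bar{x}$) each touch the rows of $\mathrm{plq}f$ a constant number of times: integrity and convexity are verified by comparing consecutive rows, and evaluating at one point requires at most a scan (or binary search) over the $N+1$ breakpoints; hence each is $O(N)$ time and $O(N)$ space. Step~4 inspects whether $\mathrm{plq}f$ is a single-row indicator, which is $O(1)$. Step~5 calls plq\_lft, which runs in $O(N)$ time by \cite[Table 2]{GARDINER-11} and, by Lemma~\ref{lemma 4.15}, returns a matrix with $O(N)$ rows; the subsequent test for $\mathrm{plq}f^{\ast}$ being a single-row indicator is $O(1)$. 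Step~6 builds $\mathrm{plq}l$, a fixed $1\times 4$ matrix, in $O(1)$.

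The crux is Step~7, the computation $\mathrm{plq}m = \text{plq\_min}(\mathrm{plq}f^{\ast},\mathrm{plq}l)$ with $\mathrm{plq}f^{\ast}$ having $O(N)$ rows and $\mathrm{plq}l$ having one row. Here I would argue, citing the description of plq\_min in \cite{lucet2009piecewise}, that the routine first forms the common refinement of the two partitions -- which in this case is simply the $O(N)$-piece partition of $\mathrm{plq}f^{\ast}$ -- and then, on each of the $O(N)$ resulting intervals, compares one quadratic piece against the single affine piece $l_{\bar{x}}$. On each interval that comparison creates at most two new breakpoints, since a line meets a quadratic in at most two points; therefore $\mathrm{plq}m$ has $O(N)$ rows and is produced in $O(N)$ time and space. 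Step~8 then reads $\hat{v}_{l}$ and $\hat{v}_{u}$ off the first and last rows of $\mathrm{plq}m$, together with a constant number of comparisons against $[\,0\ \ \bar{x}\ \ \epsilon - f(\bar{x})\,]$, which is $O(1)$.

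Summing the contributions of Steps~1--8 gives $O(N)$ time and $O(N)$ space overall, which is the assertion of Proposition~\ref{prop 4.6}. I expect the main obstacle to be the third paragraph: making precise that plq\_min of an $O(N)$-piece function with a single affine piece stays $O(N)$ -- that is, that the number of intersection breakpoints introduced is linear and that the refinement/merge is carried out in linear time rather than by a quadratic pairwise comparison of pieces. This is exactly the point where I would rely on the documented behaviour and complexity of the plq\_min routine in the CCA toolbox; everything else is routine bookkeeping.
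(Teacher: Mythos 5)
Your proposal is correct and follows essentially the same route as the paper: the paper's proof simply combines the tabulated complexities of the CCA subroutines (plq\_check, plq\_isConvex, plq\_lft, plq\_min with cost $O(N_1+N_2)$, plq\_eval) with Lemma~\ref{lemma 4.15} to conclude $O(N)$ time and space. Your extra discussion of why plq\_min stays linear (common refinement plus at most two intersections of a line with each quadratic piece) is a more detailed justification of what the paper takes directly from the documented $O(N_1+N_2)$ bound, but it is the same argument in substance.
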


        \begin{proof} Table \ref{Table4.2} summarizes the complexity of the independent subroutines in Algorithm \ref{alg2} as stated in \cite[Table 2]{lucet2009piecewise}, \cite[Table 2]{GARDINER-11} and the function description in Scilab.

      Thus, from Table \ref{Table4.2} and Lemma \ref{lemma 4.15} the result follows.
      \end{proof} \qed

        \begin{table}[H]
        \normalsize
        \caption{Core subroutines in Algorithm \ref{alg2} and their complexity}
       \centering
       \begin{tabular}{l l l }
       	\hline\hline
      	\textbf{Function} & \hspace{10pt} \textbf{Complexity} & \hspace{10pt} \textbf{Variable Description} \\
       	\hline
       	plq\_check(plq$f$) & \hspace{10pt} $O(N)$ & \hspace{10pt} $N =$ number of rows in plq$f$\\
       	plq\_isConvex(plq$f$) & \hspace{10pt} $O(N)$ \\
       	plq\_lft(plq$f$) & \hspace{10pt} $O(N)$   \\
       	plq\_min(plq$f_{1}$,plq$f_{2}$) & \hspace{10pt} $O(N_{1}+N_{2})$ & \hspace{10pt} $N_{1}, N_{2} =$ number of rows in plq$f_{1}$, plq$f_{2}$ \\
       	plq\_eval(plq$f$,$X$) & \hspace{10pt} $O(N+\tilde{k})$ & \hspace{10pt} $\tilde{k} =$ number of points plq$f$ is evaluated at\\
      	\hline	
       \end{tabular}
       	\label{Table4.2}
      	\end{table}
				
      \section{Numerical Examples}
      \label{Sec: 4}
      We now present several examples which demonstrate how Algorithm \ref{alg2} can be used to visualize the $\epsilon$-subdifferential of univariate convex plq functions. The algorithm has been implemented in Scilab \cite{SCI}. 

\subsection{Computing $\partial_{\epsilon}f(\bar{x})$ for fixed $\epsilon > 0$ and varying $\bar{x}$}

      \begin{example}\label{example 4.6}Let
      \begin{equation*}
      f(x)
      =
      \left\{
      \begin{array}{lc}
      x^{2}/2,  & -\infty< x < 0\\
      0, & 0 \leq x < +\infty
      \end{array}\right.,
      \end{equation*} at $\bar{x} =0$ and $\epsilon =1$. In plq format $f$ is stored as
      	\begin{equation*}
      	\text{plq}f = \begin{bmatrix}
      	0 && 1/2 && 0 && 0 \\
      	+\infty && 0 && 0 && 0
      	\end{bmatrix}.
      	\label{eqn4.6}
      	\end{equation*}
      	Using plq\_lft we obtain
      	\begin{equation*}
      	\text{plq}f^{\ast}(s) =
      	\begin{bmatrix}
      	0 && 0.5 && 0 && 0 \\
      	+\infty && 0 && 0 && +\infty
      	\end{bmatrix}
				\text{ corresponding to }
      	f^*(s) =
      	\left\{
      	\begin{array}{lc}
      	s^{2}/2, & s < 0\\
      	+\infty, & 0 \leq s
      	\end{array}\right..
      	\label{eqn4.7}
      	\end{equation*}
      	Here $l_{\bar{x}}(s)= 1-f(0)+\langle 0,s \rangle = 1$, in plq format we have $ \text{plq}l = \begin{bmatrix}+\infty && 0 && 0 && 1 \end{bmatrix}$. Next, we compute plq\_min$(\text{plq}f^{\ast}, \text{plq}l)$, we obtain
      	\begin{equation*}
      	\text{plq\_min}(\text{plq}f^{\ast}, \text{plq}l)
      	= \begin{bmatrix}
      	-1.4142136 &&& 0 &&& 0 &&& 1 \\
      	0 &&& 0.5 &&& 0 &&& 0 \\
      	+\infty &&& 0 &&& 0 &&& 1
      	\end{bmatrix}
      	\bigg(=
      	\left\{
      	\begin{array}{lc}
      	f^{\ast}(s),  & s \in [-1.4142136,0]\\
      	l_{\bar{x}}(s), & s \notin [-1.4142136,0]
      	\end{array}\right.\bigg).
      	\label{eqn4.8}
      	\end{equation*}
      	Hence, we obtain $\partial_{\epsilon} f(\bar{x}) \approx [-1.414,0]$ as visualized in Figure \ref{Fig5}.
      	 \begin{figure}[H]
      	 	\centering
      	 	\includegraphics[scale=0.54]{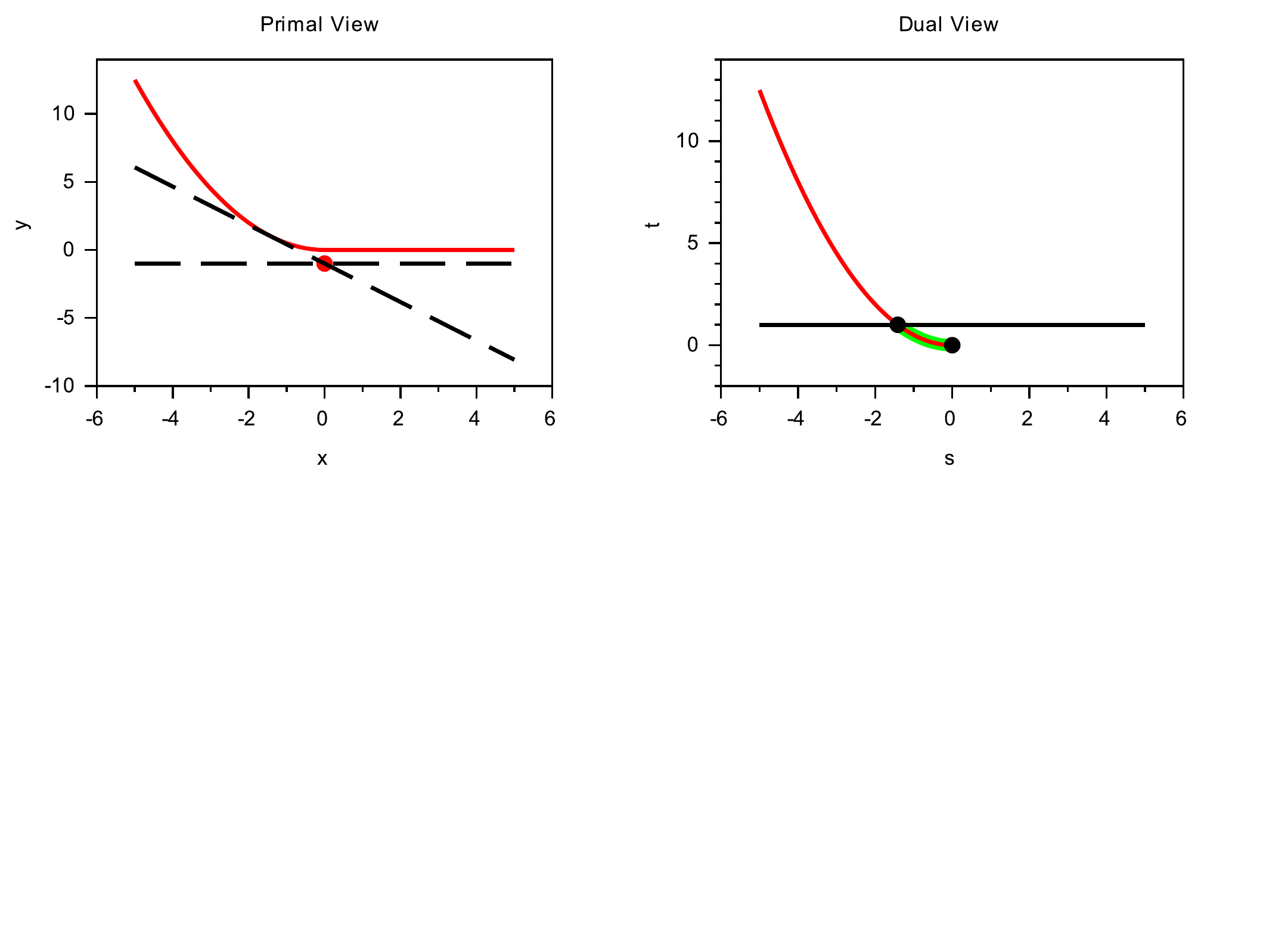}
      	 	\vspace{-130pt}
      	 	\caption{The primal view (left) depicts the graph of $f(x)=x^{2}/2$ if $x < 0$, and $0$ otherwise (red curve) along with the black dashed lines passing through the point $(\bar{x},f(\bar{x})-\epsilon)$ (red dot), with $\bar{x} = 0$ and $\epsilon =1$, having slopes $-1.414$ and $0$ respectively (the lower and upper bounds of $\partial_{\epsilon} f(\bar{x})$). The dual view depicts the graphs of $f^{\ast}(s)$ (red curve) and $l_{\bar{x}}(s)$ (solid black line). The green curve shows when the graphs of  $m(s)$ and $ f^{\ast}(s)$ coincide.}
      	 	\label{Fig5}
      	 \end{figure}
      	\end{example}
      	
      	In one dimension, from Figure \ref{Fig5}, we may geometrically interpret that the $\epsilon$-subdifferential set consists of all possible slopes belonging to the interval $\left[-1.414,0\right]$, resulting in all possible lines with the respective slopes passing through the point ($\bar{x},f(\bar{x})-\epsilon$) = $(0,-1)$. 	

     We now look into visualizing the multifunction $x\mapsto\partial_{\epsilon}f(x)$ for a given $\epsilon > 0$.

   \begin{example} We consider
   	\begin{equation*}
   f(x) = 	\left\{
   	\begin{array}{lc}
   -7x - 5,  & -\infty < x < -1\\
   	x^{2} - x, & -1 \leq x \leq 1 \\
   	2x^{2} - 3x + 1, & 1 < x < \infty
   	\end{array}\right.,
   	\end{equation*} and $\epsilon =1$. As seen in Figure \ref{Fig11a}, for $\bar{x} = -1$, $\partial_{\epsilon}f(\bar{x}) = [-7,-1]$. Correspondingly, for the choice of $\bar{x} = 0.5$ we obtain $\partial_{\epsilon}f(\bar{x}) \approx [-2,2.162]$, as visualized in Figure  \ref{newfigure}.
   	
    \begin{figure}[H]
    \centering
    \includegraphics[scale=0.5]{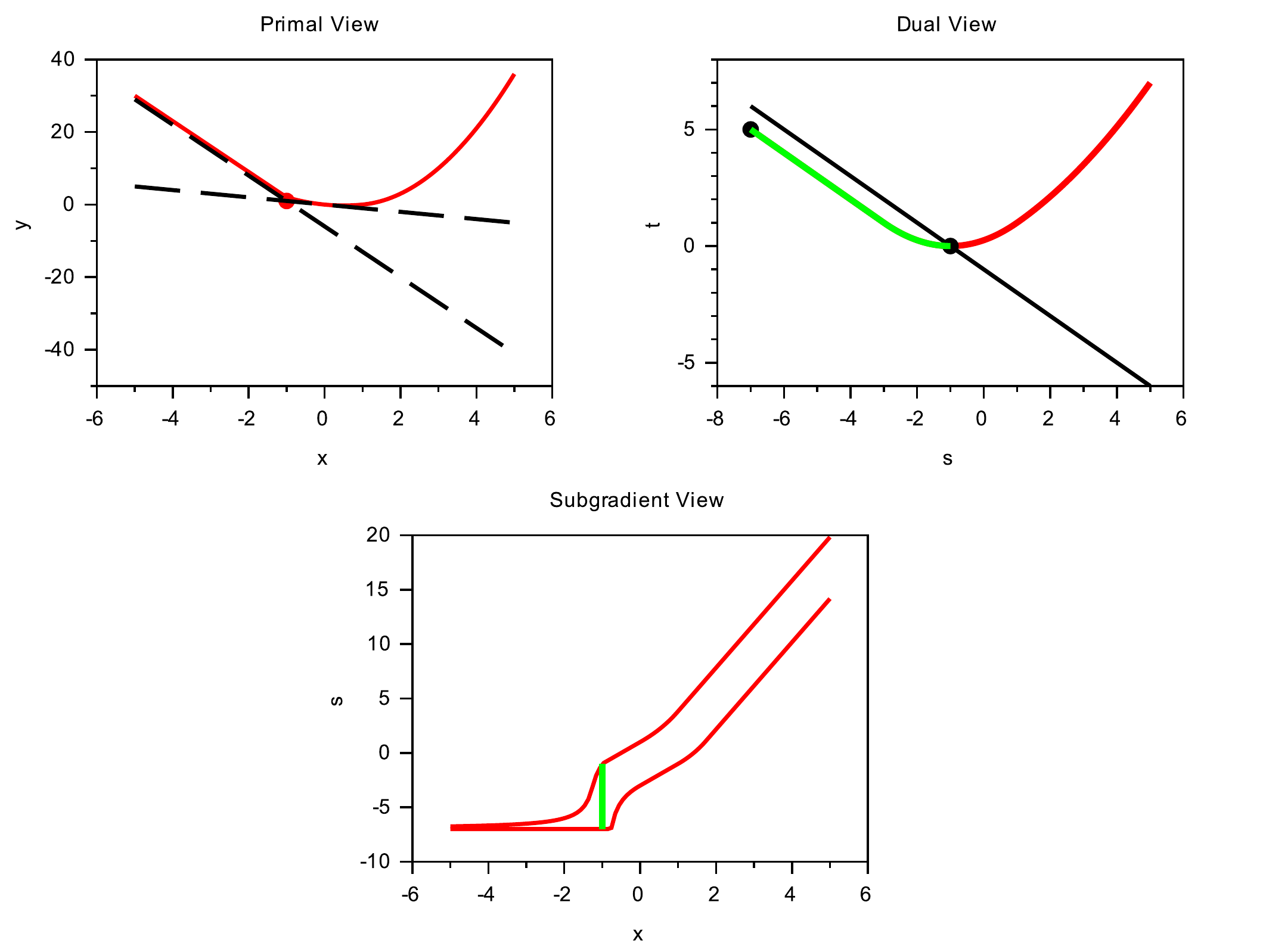}
    \caption{The primal view (left) depicts the graph of $f(x)=-7x-5$ if $x < -1$, $f(x)=x^2-x$ if $-1\leq x \leq 1$ and $2x^2-3x+1$ otherwise (red curve) along with the black dashed lines passing through the point $(\bar{x},f(\bar{x})-\epsilon)$ (red dot), with $\bar{x} = -1$ and $\epsilon =1$, having slopes $-7$ and $-1$ respectively (the lower and upper bounds of $\partial_{\epsilon} f(\bar{x})$). The dual view (right) depicts the graphs of $f^{\ast}(s)$ (red curve) and $l_{\bar{x}}(s)$ (solid black line). The green curve shows when the graphs of  $m(s)$ and $ f^{\ast}(s)$ coincide. The subgradient view (bottom) shows the graph of $x\mapsto \partial_{\epsilon} f(\bar{x})$ (lower and upper bounds) with the green line showing $\partial_{\epsilon}f(-1)$.}
    \label{Fig11a}
    \end{figure}

     \begin{figure}[H]
     	\centering
     	\includegraphics[scale=0.5]{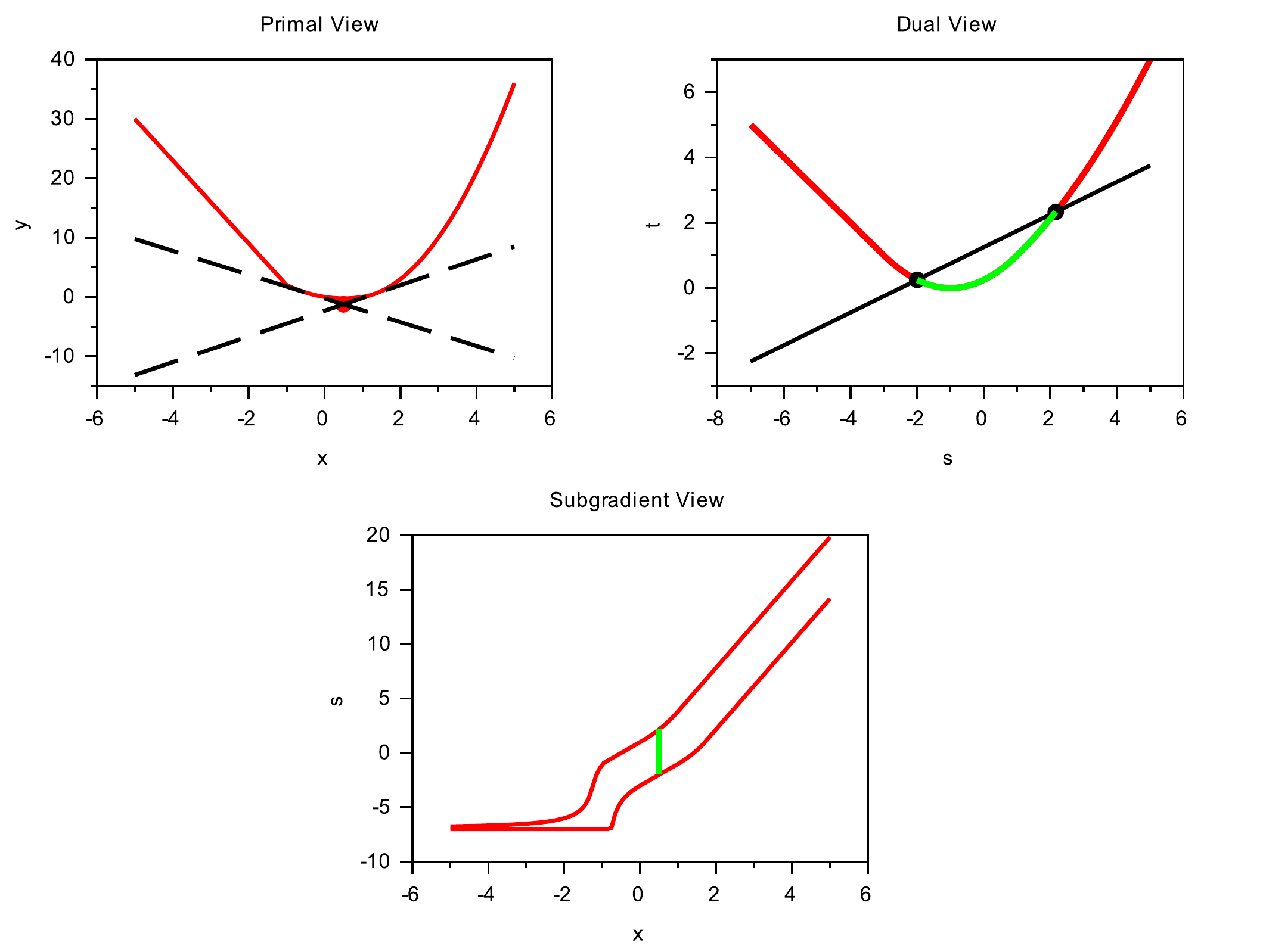}
     	\caption{
The primal view (left) depicts the graph of $f(x)=-7x-5$ if $x < -1$, $f(x)=x2-x$ if $-1\leq x \leq 1$ and $2x^2-3x+1$ otherwise (red curve) along with the black dashed lines passing through the point $(\bar{x},f(\bar{x})-\epsilon)$ (red dot), with $\bar{x} = 0.5$ and $\epsilon=1$, having slopes $-2$ and $2.162$ respectively (the lower and upper bounds of $\partial_{\epsilon} f(\bar{x})$). The dual view (right) depicts the graphs of $f^{\ast}(s)$ (red curve) and $l_{\bar{x}}(s)$ (solid black line). The green curve shows when the graphs of  $m(s)$ and $ f^{\ast}(s)$ coincide. The subgradient view (bottom) shows the graph of $x\mapsto \partial_{\epsilon} f(\bar{x})$ (lower and upper bounds) with the green line showing $\partial_{\epsilon}f(0.5)$.}
     	\label{newfigure}
     \end{figure}
   \end{example}

   In Figure \ref{newfigure}, the graph of $\partial_{\epsilon}f(x)$ (red curve) as a function of $x \in [-5,5]$ with $\epsilon =1$ is sketched by iteratively computing the respective lower and the upper bounds of $\partial_{\epsilon}f(x)$ for 100 equally spaced points in the interval $[-5,5]$. This process takes under $5$ seconds on a basic computer.

   \begin{example}
   Let
   \begin{equation*}
   f(x) = \left\{
   \begin{array}{lc}
   	+\infty,  & x < -6 \\
   	-2x, & -6 \leq x \leq 0\\
   	x^{2}-2x, & 0 < x \leq 2\\
   	2x - 4, & 2 < x \leq 3\\
   	\frac{1}{3}x^{2} -1, & 3 < x
   \end{array}\right..
   \end{equation*}	
   An animated visualization of $\partial_{\epsilon}f(\bar{x})$ for the example is presented in the following Figure \ref{Fig11c}, that takes into account $\epsilon =1$ and the choices of $\bar{x}$ as 50 equally spaced points between $[-5,4.5]$.

    \begin{figure}[ht!]
   \centering
    \animategraphics[autoplay,loop,scale=0.54]{12}{plqanimate4a}{}{}
    \caption{
		Animated version of the primal view (left), dual view (right), and subdifferential graph (bottom) for $f(x)=+\infty$ if $x<-6$, $f(x)=-2x$ if $-6\leq x \leq 0$, $f(x)=x^2-2x$ if $0\leq x \leq 2$, $f(x)=2x-5$ if $2\leq x \leq 3$ and $f(x)=x^2/3-1$ otherwise (red curve) along with the black dashed lines passing through the point $(\bar{x},f(\bar{x})-\epsilon)$ (red dot) for$\epsilon=1$. The slopes are the lower and upper bounds of $\partial_{\epsilon} f(\bar{x})$. The dual view (right) depicts the graphs of $f^{\ast}(s)$ (red curve) and $l_{\bar{x}}(s)$ (solid black line). The green curve shows when the graphs of  $m(s)$ and $ f^{\ast}(s)$ coincide. The subgradient view (bottom) shows the graph of $x\mapsto \partial_{\epsilon} f(\bar{x})$ (lower and upper bounds) with the green line showing $\partial_{\epsilon}f(\bar{x})$.}
     \label{Fig11c}
     \end{figure}
  \end{example}

     \subsection{Computing $\partial_{\epsilon}f(\bar{x})$ for fixed $\bar{x}$ and varying $\epsilon > 0$}
    We can also visualize the graph of $\partial_{\epsilon}f(\bar{x})$  as a function of $\epsilon > 0$ for a given $\bar{x}$.

    \begin{example} \label{Example 4.19}
    Consider  $\bar{x} =0$ and let
   \begin{equation*}
   f(x) =	\left\{
   \begin{array}{lc}
   \frac{1}{6}x^{2} + \frac{1}{3}x,  & x < -2 \\
   x+2, & -2 \leq x \leq 1\\
  	+\infty, & 1 < x
  	\end{array}\right..
    	\end{equation*}
    	
   An animated visualization of $\partial_{\epsilon}f(\bar{x})$ for the example is presented in the following Figure \ref{moveeps}, that takes into account $\bar{x} = -1$ and the choices of $\epsilon > 0$ as 50 equally spaced points between $[0.1,3]$.
    	
   \begin{figure}[ht!]
   \begin{center}
    \animategraphics[autoplay,loop,scale=0.54]{12}{movingeps}{}{}
    \caption{
		Animated version of the primal view (left), dual view (right), and subdifferential graph (bottom) for $f(x)=x^2/6+x/3$ if $x<-2$, $f(x)=x+2$ if $-2\leq x \leq 1$, $f(x)=+\infty$ otherwise (red curve) along with the black dashed lines passing through the point $(\bar{x},f(\bar{x})-\epsilon)$ (red dot) for$\bar{x}=-1$. The slopes are the lower and upper bounds of $\partial_{\epsilon} f(\bar{x})$. The dual view (right) depicts the graphs of $f^{\ast}(s)$ (red curve) and $l_{\bar{x}}(s)$ (solid black line). The green curve shows when the graphs of  $m(s)$ and $ f^{\ast}(s)$ coincide. The subgradient view (bottom) shows the graph of $\epsilon \mapsto \partial_{\epsilon} f(\bar{x})$ (lower and upper bounds) with the green line showing $\partial_{\epsilon}f(-1)$.}
    \label{moveeps}
    \end{center}
    \end{figure}
    \end{example}

\subsection{An illustration of Br\o{}ndsted-Rockafellar Theorem}

In this section, we visualize the Br\o{}ndsted-Rockafellar theorem.

\begin{theorem}\cite[Theorem XI.4.2.1]{hiriart1993convex}
Let $f:\mathbb{R}^{n} \rightarrow \mathbb{R} \cup \{+\infty\}$ be a proper lower- semicontinuous convex function, $\bar{x} \in \mathrm{dom}(f)$ and $\epsilon \geq 0$. For any $\lambda > 0$ and $ s \in \partial_{\epsilon}f(\bar{x})$, there exists $\bar{x}_{\lambda} \in \mathrm{dom}(f)$ and $s_{\lambda} \in \partial f(\bar{x}_{\lambda})$ such that $\lVert \bar{x}_{\lambda}- \bar{x} \rVert \leq \lambda$ and  $\lVert s_{\lambda}-s \rVert \leq \epsilon/\lambda.$
\label{Bron-Roc}
\end{theorem}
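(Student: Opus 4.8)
The plan is to reduce the statement to a near-minimization problem and then invoke Ekeland's variational principle. First I would fix $\lambda>0$ and $s\in\partial_{\epsilon}f(\bar{x})$ and introduce the tilted function $g:=f-\langle s,\cdot\rangle$, which is again proper, lower semicontinuous, and convex. The inequality defining $\partial_{\epsilon}f(\bar{x})$, namely $f(y)\ge f(\bar{x})+\langle s,y-\bar{x}\rangle-\epsilon$ for all $y\in\Rn$, translates at once into $g(y)\ge g(\bar{x})-\epsilon$ for all $y$; that is, $g$ is bounded below and $g(\bar{x})\le\inf g+\epsilon$, so $\bar{x}$ is an $\epsilon$-minimizer of $g$. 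Note also $g(\bar{x})<+\infty$ since $\bar{x}\in\dom f$.

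Next I would apply Ekeland's variational principle to $g$ on $\Rn$ (a complete metric space) with the approximate minimizer $\bar{x}$ and the parameter $\lambda$. This yields a point $\bar{x}_{\lambda}$ with $\bar{x}_{\lambda}\in\dom f$ (because $g(\bar{x}_{\lambda})\le g(\bar{x})<+\infty$), $\|\bar{x}_{\lambda}-\bar{x}\|\le\lambda$, and the crucial property that $\bar{x}_{\lambda}$ globally minimizes the perturbed function $x\mapsto g(x)+\tfrac{\epsilon}{\lambda}\|x-\bar{x}_{\lambda}\|$. Writing the optimality condition $0\in\partial\bigl(g+\tfrac{\epsilon}{\lambda}\|\cdot-\bar{x}_{\lambda}\|\bigr)(\bar{x}_{\lambda})$ and applying the convex subdifferential sum rule gives $0\in\partial g(\bar{x}_{\lambda})+\tfrac{\epsilon}{\lambda}B$, where $B$ denotes the closed unit ball, since the subdifferential of a norm is always contained in $B$. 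As $\partial g(\bar{x}_{\lambda})=\partial f(\bar{x}_{\lambda})-s$, there is a vector $u$ with $\|u\|\le\epsilon/\lambda$ and $s-u\in\partial f(\bar{x}_{\lambda})$. Taking $s_{\lambda}:=s-u$ completes the proof: $s_{\lambda}\in\partial f(\bar{x}_{\lambda})$, $\|s_{\lambda}-s\|\le\epsilon/\lambda$, and $\|\bar{x}_{\lambda}-\bar{x}\|\le\lambda$.

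The main obstacle is Ekeland's variational principle itself, whose proof rests on a completeness-and-iteration argument; once it is granted, the rest is bookkeeping. A secondary point needing care is the subdifferential sum rule $\partial(g+\psi)(\bar{x}_{\lambda})=\partial g(\bar{x}_{\lambda})+\partial\psi(\bar{x}_{\lambda})$ for $\psi=\tfrac{\epsilon}{\lambda}\|\cdot-\bar{x}_{\lambda}\|$; this is the Moreau--Rockafellar theorem and applies because $\psi$ is real-valued and continuous, so the usual qualification condition holds trivially. One could instead try a route closer to the Moreau-envelope machinery used elsewhere in this paper, taking $\bar{x}_{\lambda}$ to be a proximal point of $g$ at $\bar{x}$ and reading off $s_{\lambda}$ from the proximal optimality condition $\tfrac{1}{\mu}(\bar{x}-\bar{x}_{\lambda})\in\partial g(\bar{x}_{\lambda})$; however, estimating $\|\bar{x}-\bar{x}_{\lambda}\|$ through the $\epsilon$-minimality of $\bar{x}$ only delivers the product bound $\|\bar{x}_{\lambda}-\bar{x}\|\,\|s_{\lambda}-s\|\le 2\epsilon$, losing a factor of two compared with the sharp constant that Ekeland's principle provides.
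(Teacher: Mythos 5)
Your argument is correct, but note that the paper itself contains no proof of this theorem: it is quoted directly from Hiriart-Urruty and Lemar\'echal \cite[Theorem XI.4.2.1]{hiriart1993convex} and used only as the basis for the visualization in Figure \ref{Bron-Rock}, so there is no internal proof to compare against. What you give is the standard Ekeland-based proof of Br\o{}ndsted--Rockafellar, and the chain of reductions is sound: tilting to $g=f-\langle s,\cdot\rangle$ turns the $\epsilon$-subgradient inequality into $g(\bar{x})\le\inf g+\epsilon$; Ekeland with radius $\lambda$ and slope $\epsilon/\lambda$ produces $\bar{x}_{\lambda}\in\dom f$ with $\lVert\bar{x}_{\lambda}-\bar{x}\rVert\le\lambda$ minimizing $g+\tfrac{\epsilon}{\lambda}\lVert\cdot-\bar{x}_{\lambda}\rVert$; the Moreau--Rockafellar sum rule applies because the norm term is finite and continuous, and $\partial g(\bar{x}_{\lambda})=\partial f(\bar{x}_{\lambda})-s$ gives $s_{\lambda}$ with $\lVert s_{\lambda}-s\rVert\le\epsilon/\lambda$. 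Two small remarks. First, since the setting here is $\Rn$ you do not even need Ekeland: the function $x\mapsto g(x)+\tfrac{\epsilon}{\lambda}\lVert x-\bar{x}\rVert$ is proper, lsc, convex and coercive (as $g$ is bounded below), hence attains its minimum at some $\bar{x}_{\lambda}$; the same subdifferential computation bounds $\lVert s_{\lambda}-s\rVert$ by $\epsilon/\lambda$, and the minimality inequality $g(\bar{x}_{\lambda})+\tfrac{\epsilon}{\lambda}\lVert\bar{x}_{\lambda}-\bar{x}\rVert\le g(\bar{x})\le g(\bar{x}_{\lambda})+\epsilon$ directly yields $\lVert\bar{x}_{\lambda}-\bar{x}\rVert\le\lambda$. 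This is the more elementary finite-dimensional route (essentially the one in the cited reference); Ekeland is what you would need to make the same argument work in a general Banach space. Second, most statements of Ekeland's principle assume $\epsilon>0$, whereas the theorem allows $\epsilon\ge 0$; the case $\epsilon=0$ should be dispatched separately by taking $\bar{x}_{\lambda}=\bar{x}$ and $s_{\lambda}=s$. Your closing observation that the proximal-point alternative only achieves the product bound $2\epsilon$ rather than $\epsilon$ is accurate.
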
	

Theorem \ref{Bron-Roc} asserts that for a one-dimensional proper lower-semicontinuous convex function, any $\epsilon$-subgradient at $\bar{x}$ can be approximated by some true subgradient computed (possibly) at some $y\neq \bar{x}$, lying within a rectangle of width $\lambda$ and height $\epsilon/\lambda$. For a better understanding, we consider the following animated example.

\begin{example}
Consider for $x \in \mathbb{R}$
\begin{equation*}
f(x) = \left\{
\begin{array}{lc}
x^{2}/3,  & x \leq -2,\\
x/2+7/3, & -2 \leq x \leq 2.5, \\
x^{2}-8/3,    & 2.5 < x,
\end{array}\right.\end{equation*} and $\bar{x} = -1.5$. Figure \ref{Bron-Roc} visualizes
the Br\o{}ndsted-Rockafellar theorem at $(\bar{x},v_{l}) \approx (-1.5,-1.471) \in \partial_{\epsilon}f(-1.5)$ (black star).

 \begin{figure}[H]
 	\centering
 	\subfloat[For $\epsilon =1$ and $\lambda \in {[0.2,2]}$]{\animategraphics[autoplay,loop,width=0.5\textwidth]{14}{Bron_Rock_animate3}{}{}\label{fig:f1}}
    \hfill
 	\subfloat[For $\lambda = 1$ and $\epsilon \in {[0.1,2]}$]{\animategraphics[autoplay,loop,width=0.5\textwidth]{14}{Bron_Rock_epschange}{}{}\label{fig:f2}}
 	\caption{An illustration of Br\o{}ndsted-Rockafellar theorem at $\bar{x} = -1.5$. The graph depicts $\partial_{\epsilon}f(x)$ (red dashed curve) and $\partial f(x)$ (black curve) for $x \in [-5,5]$. The theorem asserts that the blue rectangle always intersect the black curve.
	}
 	\label{Bron-Rock}
 \end{figure}

In Figure \ref{fig:f1}, for a given $\epsilon=1$, and $(\bar{x},v_{l}) \approx (-1.5,-1.471)$, we plot rectangles having respective dimensions $\lambda \times (\epsilon/\lambda)$ for $50$ different choices of $\lambda \in [0.2,2]$. We observe that, as stated by the Br\o{}ndsted-Rockafellar theorem, for each choice of $\lambda$ the rectangles intersect the true subdifferential. Likewise, in Figure $\ref{fig:f2}$ we repeat the same process with a fixed $\lambda =1$ and 50 different choices of $\epsilon \in [0.1,2]$ leading to a similar conclusion.
\end{example}

   \section{Conclusion and Future Work}
   \label{Sec: 5}
   In this work, we first proposed a general algorithm that computes the $\epsilon$-subdifferential of any proper convex function, and then presented an implementation for univariate convex plq functions.  The implementation allows for rapid computation and visualization of the $\epsilon$-subdifferential for any such function, and extends the CCA numerical toolbox.

   Noting that the algorithm is implementable in one dimension, it is natural to ask whether an extension to higher dimension is possible.  Note that if $f:\R^n \rightarrow \R$, then visualization of the subdifferential is difficult, since $\partial f(x)$ is a set-valued mapping from $\R^n$ into $\R^n$.  In addition, in dimensions greater than $1$, the minimum of two plq functions is no longer representable using the plq data structure presented in Subsection \ref{Susubsec: 3.2.1}.  Consider, for example,
        $$\min(1/2\lVert s \rVert ^{2}-1,0) = \left\{
        \begin{array}{ll}
        1/2\lVert s \rVert ^{2}-1 & \|x\| \leq 1, \\
        0 & \|x\| > 1.
        \end{array}\right.$$
   and note that the domain is not split into polyhedral pieces. Hence, the $\epsilon$-subdifferential is no longer polyhedral even for functions of 2 variables. It is a convex set whose boundary is defined by piecewise curves; in some cases it is an ellipse. 
	
	Note that there has been work on computing the conjugate of bivariate functions~\cite{JAKEE-13,GARDINER-11,GARDINER-13}. However, the resulting data structures are much harder to manipulate. We leave it to future work to extend our results to higher dimensions.

   Two other directions for future work are as follows. First, the current method to produce the subdifferential view (e.g., Figure \ref{Fig11a}) requires computing the $\epsilon$-subdifferential for a wide selection of $x$ values.  It may be possible to improve this through a careful analysis of Proposition \ref{thm: 3.1}.  Another clearly valuable direction of extension would be developing methods to visualize the $\epsilon$-subdifferential of any univariate convex function.  A first approach to this could be achieved by approximating the univariate convex function with a univariate convex plq function.  However, it may be more efficient to try to directly solve $\inf \{ v \in \text{dom}(f^*) : m(v) = f^*(v)\}$ and $\sup \{ v \in \text{dom}(f^*) : m(v) = f^*(v)\}$ using a numerical optimization method.



\begin{acknowledgements}
This work was supported in part by
Discovery Grants \#355571-2013 (Hare) and \#298145-2013 (Lucet) from NSERC, and The University of British Columbia, Okanagan campus. Part of the
research was performed in the Computer-Aided Convex Analysis (CA2) laboratory funded by a
Leaders Opportunity Fund (LOF) from the Canada Foundation for Innovation (CFI) and by a
British Columbia Knowledge Development Fund (BCKDF).

Special thanks to Heinz Bauschke for recommending we visualize the Br\o{}ndsted-Rockafellar Theorem.

The final publication is available at Springer via \url{http://dx.doi.org/10.1007/s10589-017-9892-y}

\end{acknowledgements}

\bibliographystyle{spmpsci}      
\bibliography{mybibliography}   

\begin{thebibliography}{10}
\providecommand{\url}[1]{{#1}}
\providecommand{\urlprefix}{URL }
\expandafter\ifx\csname urlstyle\endcsname\relax
  \providecommand{\doi}[1]{DOI~\discretionary{}{}{}#1}\else
  \providecommand{\doi}{DOI~\discretionary{}{}{}\begingroup
  \urlstyle{rm}\Url}\fi

\bibitem{JMLR:v14:aravkin13a}
Aravkin, A., Burke, J., Pillonetto, G.: Sparse/robust estimation and {K}alman
  smoothing with nonsmooth log-concave densities: Modeling, computation, and
  theory.
\newblock Journal of Machine Learning Research \textbf{14}, 2689--2728 (2013).
\newblock \urlprefix\url{http://jmlr.org/papers/v14/aravkin13a.html}

\bibitem{Bonnans2006}
Bonnans, J., Gilbert, J.C., Lemar{\'e}chal, C., Sagastiz{\'a}bal, C.A.:
  Numerical Optimization: Theoretical and Practical Aspects.
\newblock Springer Science \& Business Media (2006)

\bibitem{borwein2010convex}
Borwein, J., Lewis, A.: Convex Analysis and Nonlinear Optimization: Theory and
  Examples.
\newblock Springer Science \& Business Media (2010)

\bibitem{BRONDSTED-65}
Br{\o}ndsted, A., Rockafellar, R.T.: On the subdifferentiability of convex
  functions.
\newblock Proc. Amer. Math. Soc. \textbf{16}, 605--611 (1965).
\newblock \urlprefix\url{http://www.jstor.org/stable/2033889}

\bibitem{CorreaHantouteJourani2016}
Correa, R., Hantoute, A., Jourani, A.: Characterizations of convex approximate
  subdifferential calculus in {B}anach spaces.
\newblock Trans. Amer. Math. Soc. \textbf{368}(7), 4831--4854 (2016).
\newblock \doi{10.1090/tran/6589}.
\newblock \urlprefix\url{http://dx.doi.org/10.1090/tran/6589}

\bibitem{CorreaLemarechal1993}
Correa, R., Lemar{\'e}chal, C.: Convergence of some algorithms for convex
  minimization.
\newblock Math. Programming \textbf{62}(2, Ser. B), 261--275 (1993).
\newblock \doi{10.1007/BF01585170}.
\newblock \urlprefix\url{http://dx.doi.org/10.1007/BF01585170}

\bibitem{dembo1990efficient}
Dembo, R., Anderson, R.: An efficient linesearch for convex
  piecewise-linear/quadratic functions.
\newblock In: Advances in numerical partial differential equations and
  optimization ({M}\'erida, 1989), pp. 1--8. SIAM, Philadelphia, PA (1991)

\bibitem{Frangioni2002}
Frangioni, A.: Generalized bundle methods.
\newblock SIAM J. Optim. \textbf{13}(1), 117--156 (electronic) (2002).
\newblock \doi{10.1137/S1052623498342186}.
\newblock \urlprefix\url{http://dx.doi.org/10.1137/S1052623498342186}

\bibitem{GARDINER-13}
Gardiner, B., Jakee, K., Lucet, Y.: Computing the partial conjugate of convex
  piecewise linear-quadratic bivariate functions.
\newblock Comput. Optim. Appl. \textbf{58}(1), 249--272 (2014).
\newblock \doi{10.1007/s10589-013-9622-z}.
\newblock \urlprefix\url{http://dx.doi.org/10.1007/s10589-013-9622-z}

\bibitem{gardiner2010convex}
Gardiner, B., Lucet, Y.: Convex hull algorithms for piecewise linear-quadratic
  functions in computational convex analysis.
\newblock Set-Valued and Variational Analysis \textbf{18}(3-4), 467--482 (2010)

\bibitem{GARDINER-11}
Gardiner, B., Lucet, Y.: Computing the conjugate of convex piecewise
  linear-quadratic bivariate functions.
\newblock Mathematical Programming \textbf{139}(1-2), 161--184 (2013).
\newblock \doi{10.1007/s10107-013-0666-8}.
\newblock \urlprefix\url{http://dx.doi.org/10.1007/s10107-013-0666-8}

\bibitem{hare2014thresholds}
Hare, W., Planiden, C.: Thresholds of prox-boundedness of {PLQ} functions.
\newblock Journal of Convex Analysis \textbf{23}(3), 1--28 (2016)

\bibitem{HareSagastizabal2010}
Hare, W., Sagastiz{\'a}bal, C.: A redistributed proximal bundle method for
  nonconvex optimization.
\newblock SIAM J. Optim. \textbf{20}(5), 2442--2473 (2010).
\newblock \doi{10.1137/090754595}.
\newblock \urlprefix\url{http://dx.doi.org/10.1137/090754595}

\bibitem{hiriart1993convex}
Hiriart-Urruty, J., Lemar{\'e}chal, C.: Convex Analysis and Minimization
  Algorithms {II}: Advanced Theory and Bundle Methods, vol. 306 of Grundlehren
  der mathematischen Wissenschaften.
\newblock Springer-Verlag, New York (1993)

\bibitem{hiriart1995subdifferential}
Hiriart-Urruty, J., Moussaoui, M., Seeger, A., Volle, M.: Subdifferential
  calculus without qualification conditions, using approximate
  subdifferentials: A survey.
\newblock Nonlinear Analysis: Theory, Methods \& Applications \textbf{24}(12),
  1727--1754 (1995)

\bibitem{IOFFE-84}
Ioffe, A.D.: Approximate subdifferentials and applications. {I}. {T}he
  finite-dimensional theory.
\newblock Trans. Amer. Math. Soc. \textbf{281}(1), 389--416 (1984).
\newblock \urlprefix\url{http://dx.doi.org/10.2307/1999541}

\bibitem{JAKEE-13}
Jakee, K.M.K.: Computational convex analysis using parametric quadratic
  programming.
\newblock Master's thesis, University of British Columbia (2013).
\newblock \urlprefix\url{https://circle.ubc.ca/handle/2429/45182}

\bibitem{Kiwiel1990}
Kiwiel, K.: Proximity control in bundle methods for convex nondifferentiable
  minimization.
\newblock Math. Programming \textbf{46}(1, (Ser. A)), 105--122 (1990).
\newblock \doi{10.1007/BF01585731}.
\newblock \urlprefix\url{http://dx.doi.org/10.1007/BF01585731}

\bibitem{Kiwiel1995}
Kiwiel, K.: Proximal level bundle methods for convex nondifferentiable
  optimization, saddle-point problems and variational inequalities.
\newblock Math. Programming \textbf{69}(1, Ser. B), 89--109 (1995).
\newblock \doi{10.1007/BF01585554}.
\newblock \urlprefix\url{http://dx.doi.org/10.1007/BF01585554}.
\newblock Nondifferentiable and large-scale optimization (Geneva, 1992)

\bibitem{LemarechalSagastizabal1997}
Lemar{\'e}chal, C., Sagastiz{\'a}bal, C.: Variable metric bundle methods: from
  conceptual to implementable forms.
\newblock Math. Programming \textbf{76}(3, Ser. B), 393--410 (1997).
\newblock \doi{10.1016/S0025-5610(96)00053-6}.
\newblock \urlprefix\url{http://dx.doi.org/10.1016/S0025-5610(96)00053-6}

\bibitem{atoms}
Lucet, Y.: Computational convex analysis library, 1996--2016.
\newblock \url{http://atoms.scilab.org/toolboxes/CCA/}

\bibitem{lucet2009piecewise}
Lucet, Y., Bauschke, H., Trienis, M.: The piecewise linear-quadratic model for
  computational convex analysis.
\newblock Computational Optimization and Applications \textbf{43}(1), 95--118
  (2009)

\bibitem{OliveiraSagastizabal2014}
de~Oliveira, W., Sagastiz{\'a}bal, C.: Level bundle methods for oracles with
  on-demand accuracy.
\newblock Optim. Methods Softw. \textbf{29}(6), 1180--1209 (2014).
\newblock \doi{10.1080/10556788.2013.871282}.
\newblock \urlprefix\url{http://dx.doi.org/10.1080/10556788.2013.871282}

\bibitem{OliveiraSolodov2016}
de~Oliveira, W., Solodov, M.: A doubly stabilized bundle method for nonsmooth
  convex optimization.
\newblock Math. Program. \textbf{156}(1-2, Ser. A), 125--159 (2016).
\newblock \doi{10.1007/s10107-015-0873-6}.
\newblock \urlprefix\url{http://dx.doi.org/10.1007/s10107-015-0873-6}

\bibitem{rantzer2000piecewise}
Rantzer, A., Johansson, M.: Piecewise linear quadratic optimal control.
\newblock Automatic Control, IEEE Transactions on \textbf{45}(4), 629--637
  (2000)

\bibitem{rockafellar1988essential}
Rockafellar, R.: On the essential boundedness of solutions to problems in
  piecewise linear quadratic optimal control.
\newblock Analyse mathematique et applications. Gauthier villars pp. 437--443
  (1988)

\bibitem{rockafellar2015convex}
Rockafellar, R.: Convex Analysis.
\newblock Princeton University Press (2015)

\bibitem{rockafellar1986lagrangian}
Rockafellar, R., Wets, R.: A lagrangian finite generation technique for solving
  linear-quadratic problems in stochastic programming.
\newblock In: Stochastic Programming 84 Part II, pp. 63--93. Springer (1986)

\bibitem{rockafellar2009variational}
Rockafellar, R., Wets, R.: Variational Analysis, vol. 317.
\newblock Springer Science \& Business Media (2009)

\bibitem{SCI}
Scilab:: Scilab.
\newblock \url{http://www.scilab.org/} (2015)

\bibitem{trienis2007computational}
Trienis, M.: Computational convex analysis: From continuous deformation to
  finite convex integration.
\newblock Master thesis  (2007)

\end{thebibliography}

%
%

\end{document}